\newcommand{\preprint}[1]{}
\newcommand{\hide}[1]{}
\numberwithin{equation}{section}
\theoremstyle{plain}
\newtheorem{thm}{Theorem}[section]
\newtheorem{prop}[thm]{Proposition}
\newtheorem{claim}[thm]{Claim}
\newtheorem{lem}[thm]{Lemma}
\theoremstyle{definition}
\newtheorem{defi}[thm]{Definition}
\theoremstyle{remark}
\newtheorem{example}[thm]{Example}
\newcommand{\A}{{\mathcal A}}
\newcommand{\D}{{\mathcal D}}
\newcommand{\E}{{\mathcal E}}
\newcommand{\I}{{\mathcal I}}
\newcommand{\LB}{{\mathcal L}}
\newcommand{\fM}{{\mathfrak M}}
\newcommand{\PP}{{\mathbb P}}
\renewcommand{\S}{{\mathcal S}}
\newcommand{\fS}{{\mathfrak S}}
\newcommand{\U}{{\mathcal U}}
\newcommand{\V}{{\mathcal V}}
\newcommand{\X}{{\mathcal X}}
\newcommand{\Y}{{\mathcal Y}}
\newcommand{\Z}{{\mathcal Z}}
\newcommand{\Integers}{{\mathbb Z}}
\newcommand{\ComplexNumbers}{{\mathbb C}}
\newcommand{\linsys}[1]{{\mid}#1{\mid}}
\newcommand{\RightArrowOf}[1]{\stackrel{#1}{\rightarrow}}
\newcommand{\StructureSheaf}[1]{{\mathcal O}_{#1}}
\newcommand{\restricted}[2]{#1_{\mid_{#2}}}
\newcommand{\rank}{{\rm rank}}
\newcommand{\Pic}{{\rm Pic}}
\newcommand{\Sym}{{\rm Sym}}
\newcommand{\Ext}{{\rm Ext}}
\newcommand{\Hom}{{\rm Hom}}
\newcommand{\End}{{\rm End}}
\newcommand{\SheafHom}{{\mathcal H}om}
\newcommand{\SheafEnd}{{\mathcal E}nd}
\newcommand{\SheafExt}{{\mathcal E}xt}
\newcommand{\Wedge}[1]{\stackrel{#1}{\wedge}}
\begin{document}
\title[Stability]
{Stability of a natural sheaf over the cartesian square of the Hilbert scheme of points on a $K3$ surface}
\author{Eyal Markman}
\address{Department of Mathematics and Statistics, 
University of Massachusetts, Amherst, MA 01003, USA}
\email{markman@math.umass.edu}

\date{\today}

\begin{abstract}
Let $S$ be a $K3$ surface and $S^{[n]}$ the Hilbert scheme of length $n$ subschemes of $S$. Over the cartesian square 
$S^{[n]}\times S^{[n]}$
there exists a natural reflexive rank $2n-2$ coherent sheaf $E$, which is locally free away from the diagonal. The fiber of $E$ over 
the point $(I_{Z_1},I_{Z_2})$, corresponding to ideal sheaves of distinct subschemes $Z_1\neq Z_2$, is
$\Ext^1_S(I_{Z_1},I_{Z_2})$. We prove that $E$ is slope stable if the rank of the Picard group of $S$ is $\leq 19$.
The Chern classes of $\SheafEnd(E)$ are known to be monodromy invariant.
Consequently, the sheaf $\SheafEnd(E)$ is polystable-hyperholomorphic.

\end{abstract}

\maketitle
\tableofcontents
%*****************************************************************
%
%*****************************************************************
\section{Introduction}

Let $S$ be a  $K3$ surface, not necessarily projective, and $n$  an integer $\geq 2$. 
Denote by $S^{[n]}$ the Hilbert scheme, or Douady space, of length $n$ zero-dimensional subschemes of $S$.
Let $\U$ be the ideal sheaf of the universal subscheme of $S\times S^{[n]}$. 
Let $\pi_{ij}$ be the projection from $S^{[n]}\times S\times S^{[n]}$ onto the product of the $i$-th and $j$-th factors.
The relative extension sheaf
\begin{equation}
\label{eq-E}
E := \SheafExt^1_{\pi_{13}}\left(\pi_{12}^*\U,\pi_{23}^*\U\right)
\end{equation}
%with a trivial Picard group. 
is a  rank $2n-2$ reflexive  sheaf  over $S^{[n]}\times S^{[n]}$
\cite[Prop. 4.1]{markman-hodge}. The sheaf $E$ is infinitesimally rigid, i.e., $\Ext^1(E,E)$ vanishes, by
\cite[Lemma 5.2]{generalized-deformations}. We prove in this note the following statement.

\begin{thm}
\label{cor-stability-of-E-hilbert-scheme-case}
\begin{enumerate}
\item
\label{cor-item-stability-for-every-omega}
When $\Pic(S)$ is trivial the sheaf 
%$E$ does not have any non-trivial subsheaf of rank less than $2n-2$.
$E$ is 
%thus 
slope-stable with respect to every K\"{a}hler class.
\item
\label{cor-item-stability-for-some-omega}
If $0<\rank(\Pic(S))\leq 19$,
then $E$ is $\omega\boxplus \omega$-slope-stable with respect to some 
K\"{a}hler class $\omega$ on $S^{[n]}$.
\end{enumerate}
\end{thm}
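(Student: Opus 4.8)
\emph{Strategy.} The plan is to rule out the existence of a destabilizing subsheaf by combining the self-duality coming from the factor-exchange involution with the fact that the maximal destabilizing subsheaf is canonical, hence forced to have a highly constrained determinant. Since $E$ is reflexive and fails to be locally free only along the diagonal $\Delta\cong S^{[n]}$, which has codimension $2n\geq 4$ in $S^{[n]}\times S^{[n]}$, slope-stability may be tested using saturated subsheaves of the vector bundle $E|_{(S^{[n]}\times S^{[n]})\setminus\Delta}$. I would therefore work with a saturated subsheaf $F\subset E$ of rank $0<\rank(F)<2n-2$ with torsion-free quotient and assume, for contradiction, that $\mu_\eta(F)\geq\mu_\eta(E)$, where $\eta=\omega\boxplus\omega$. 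Replacing $F$ by the maximal destabilizing subsheaf of the Harder--Narasimhan filtration makes $F$ unique, and hence invariant under every automorphism and parallel-transport operator that preserves the polarized pair $(E,\eta)$.

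\emph{The involution and vanishing of the slope.} Let $\tau$ be the involution of $S^{[n]}\times S^{[n]}$ exchanging the two factors. Relative Serre duality for $\pi_{13}$ along its $K3$ fibers, together with $\omega_S\cong\StructureSheaf{S}$, identifies the fiber $\Ext^1_S(I_{Z_2},I_{Z_1})$ of $\tau^*E$ with the dual of the fiber $\Ext^1_S(I_{Z_1},I_{Z_2})$ of $E$; I would promote this to a global isomorphism $\tau^*E\cong E^\vee$ on the locally free locus and extend it by reflexivity. Because $S^{[n]}$ is simply connected, $H^2(S^{[n]}\times S^{[n]})$ has no mixed K\"unneth component, so $c_1(E)=p_1^*\alpha+p_2^*\beta$; the identity $\tau^*c_1(E)=-c_1(E)$ then forces $\beta=-\alpha$, whence $c_1(E)=p_1^*\alpha-p_2^*\alpha$ with $\alpha\in\mathrm{NS}(S^{[n]})$. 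Since $\eta$ is $\tau$-invariant and integration is $\tau$-invariant while $c_1(E)$ is $\tau$-anti-invariant, $\deg_\eta(E)=c_1(E)\cdot\eta^{4n-1}=0$, so $\mu_\eta(E)=0$. Moreover $\tau$ interchanges subsheaves of $E$ with subsheaves of $E^\vee$, i.e. with saturated quotients of $E$, so the destabilization problem is self-dual; this symmetry is what I would use to reduce the strictly-semistable borderline to an indecomposability statement.

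\emph{Constraining the determinant.} The determinant $c_1(F)\in\mathrm{NS}(S^{[n]}\times S^{[n]})$ of the canonical subsheaf $F$ must lie in the part of the N\'eron--Severi group that is invariant under the monodromy operators preserving $\eta$; these arise from deformations of $S$, hence of $S^{[n]}\times S^{[n]}$, keeping $\omega$ of type $(1,1)$, under which $E$ deforms uniquely by its infinitesimal rigidity $\Ext^1(E,E)=0$. For part~(1), when $\Pic(S)=0$ one has $\mathrm{NS}(S^{[n]})=\mathbb{Z}\delta$ and $\mathrm{NS}(S^{[n]}\times S^{[n]})=\mathbb{Z}\delta_1\oplus\mathbb{Z}\delta_2$ of rank $2$, so $c_1(F)=a\delta_1+b\delta_2$; no choice of $\omega$ is needed and the conclusion will hold for every K\"ahler class. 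For part~(2), the hypothesis $0<\rank\Pic(S)\leq 19$ (equivalently, transcendental rank $\geq 3$) supplies enough deformation directions to move $S$ to a $K3$ whose Picard group is generated by a single class proportional to $\omega$; choosing $\omega$ accordingly, the monodromy-invariant part of $\mathrm{NS}(S^{[n]}\times S^{[n]})$ collapses to the small lattice spanned by $\delta_1,\delta_2$ and $c_1(E)$, so the same determinant constraint as in part~(1) applies to $c_1(F)$. I would then run a Hodge-index and positivity computation: writing $\deg_\eta(F)$ against $\eta^{4n-1}$ in this rank-$\leq 2$ lattice and combining it with the bound on $c_1(F)$ forced by the inclusion $\det F\hookrightarrow\wedge^{\rank(F)}E$, I expect to obtain $\mu_\eta(F)<0$ for every admissible rank and determinant, contradicting $\mu_\eta(F)\geq 0$.

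\emph{Borderline case and main obstacle.} It remains to exclude a saturated $F$ with $\mu_\eta(F)=0=\mu_\eta(E)$, i.e. strict semistability. Here I would invoke the self-duality of the second step together with $\Ext^1(E,E)=0$: a slope-$0$ saturated subsheaf would split off a canonical polystable summand, and showing that such a splitting is incompatible with the indecomposability of $E$ (equivalently with $\Hom(E,E)=\ComplexNumbers$) upgrades semistability to stability. The main obstacle, in both parts, is the slope inequality of the third step: the class $\delta_1$ pairs \emph{positively} with $\eta^{4n-1}$, so excluding $\mu_\eta(F)\geq 0$ is not formal and genuinely requires an effective upper bound on the positivity of $c_1(F)$ for subsheaves of $E$, which is where the geometry of the extension sheaf $E$ must enter. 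The secondary delicate point is the exact threshold $\rank\Pic(S)\leq 19$: it is precisely the condition guaranteeing the deformation room needed to shrink the monodromy-invariant N\'eron--Severi lattice, and verifying that rank $20$ genuinely obstructs the argument is the part I would scrutinize most carefully.
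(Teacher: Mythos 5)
Your proposal assembles several true observations ($\tau^*E\cong E^\vee$ via relative Serre duality, $c_1(E)=p_1^*\delta-p_2^*\delta$ up to sign, the smallness of $\mathrm{NS}(S^{[n]}\times S^{[n]})=\Integers\delta_1\oplus\Integers\delta_2$ when $\Pic(S)=0$), but it stops exactly where the real proof has to begin. You yourself write that the argument ``genuinely requires an effective upper bound on the positivity of $c_1(F)$ for subsheaves of $E$, which is where the geometry of the extension sheaf $E$ must enter'' --- and you never supply that bound. This is not a loose end; it is the entire content of the theorem. Even in case (1), nothing formal prevents a saturated subsheaf with $c_1(F)=a\delta_1+b\delta_2$ with $a,b$ large and positive, and no Hodge-index or lattice computation can exclude it. The paper supplies the missing input in two geometric steps absent from your outline: (i) on the blow-up $\beta:Y\to S^{[n]}\times S^{[n]}$ of the diagonal, the sheaf $V=\beta^*E(\widetilde{\Delta})/\mbox{torsion}$ is locally free and restricts to $\widetilde{\Delta}\cong\PP(T^*\Delta)$ as $\ell^\perp/\ell$ (Proposition \ref{prop-V}); the classification of $\fS_n$-equivariant saturated subsheaves of $\tilde{\ell}^\perp/\tilde{\ell}$ over $\PP(T^*S^n)$ (Proposition \ref{prop-ell-perp-mod-ell-is-stable}) then forces every proper saturated subsheaf $F\subset E$ to have rank exactly $n-1$; (ii) the canonical left-exact sequence $0\to p_2^*A_0\to E\to p_1^*A_0^*$ of Proposition \ref{prop-canonical-subsheaf-of-E}, whose cokernel is supported on the codimension-two incidence variety $\I$, combined with Lemma \ref{lemma-reflexive-criterion} and the absence of effective divisors containing $\I$, forces $F$ to equal the canonical subsheaf $F_0=p_2^*A_0$, whose first Chern class is an explicit multiple of $p_2^*\delta$ with the non-destabilizing sign. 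Without identifying the candidate subsheaf, your scheme cannot compute any slope.

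Three further points. First, your monodromy-invariance constraint on $c_1(F)$ is unjustified: the maximal destabilizing subsheaf is canonical for a fixed complex structure, but subsheaves do not deform with $E$ along deformations of $S$, so $c_1(F)$ need not be monodromy invariant; and appealing to the hyperholomorphic deformation theory of $E$ would be circular, since that theory requires the stability being proved. Second, the paper's part (2) is not a lattice-shrinking argument at all: it defines the locus $W\subset\fM_{K3}$ where $\omega\boxplus\omega$-stability holds for some $\omega$, shows $W$ is open and $O^+(\Lambda)$-invariant, and invokes Verbitsky's ergodicity theorem through Lemma \ref{lemma-W-contains-marked-pairs-of-non-maximal-Picard-rank}; this density-plus-openness mechanism, seeded by part (1), is what produces the threshold $\rank(\Pic(S))\leq 19$. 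Third, your borderline step fails as stated: a slope-zero saturated subsheaf of a slope-zero sheaf does not split off a summand merely because $\Ext^1(E,E)=0$, and in any case your reduction $\mu_\eta(E)=0$ uses $\tau$-invariance of $\eta$, whereas part (1) asserts stability for arbitrary K\"{a}hler classes on the product, which need not be $\tau$-invariant. In the paper neither issue arises, because the unique subsheaf $F_0$ has strictly negative degree against every K\"{a}hler class.
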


The K\"{a}hler class $\omega\boxplus \omega$ above is the sum of the pullbacks of $\omega$ via the two projections.
The theorem is proven in Section \ref{sec-restriction-to-the-diagonal}. 
The class $c_2(\SheafEnd(E))$ remains of Hodge type $(2,2)$ on $X\times X$, for all K\"{a}hler manifolds $X$ deformation equivalent to $S^{[n]}$, by \cite[Lemma 3.2 and Prop. 3.4]{markman-hodge}. 
Such a manifold $X$ is called of {\em $K3^{[n]}$-type.}
The slope stability result of the above theorem implies that $\SheafEnd(E)$ deforms to a reflexive sheaf $\A$ of  Azumaya algebras over $X\times X$, for all manifolds $X$ of $K3^{[n]}$-type, by a theorem of Verbitsky 
\cite[Theorem 3.19]{kaledin-verbitski-book} (see also \cite[Cor. 6.11 and Prop. 6.16]{markman-hodge}).
Slope stability was proven earlier in \cite[Theorem 7.4]{markman-hodge} for the analogue of the sheaf $E$ over the cartesian square $M\times M$ of a moduli space $M$ 
of stable coherent sheaves of rank $2n-2$ on a projective $K3$ surface $S$. There
$M$ was chosen so that the sheaf $E$ is twisted by a Brauer class of maximal order equal to the rank $2n-2$ of $E$, a fact that implies the slope-stability of $E$, as well as the slope polystability of the untwisted $\SheafEnd(E)$, with respect to every K\"{a}hler class 
\cite[Prop. 6.5]{markman-hodge}. 

Theorem \ref{cor-stability-of-E-hilbert-scheme-case} plays an important role in two joint works with S. Mehrotra
\cite{torelli,generalized-deformations}. There we need the stability of $E$ in Theorem \ref{cor-stability-of-E-hilbert-scheme-case} 
(rather than over a  moduli space $M$ of higher rank sheaves), since only when the moduli space is a Hilbert scheme we could prove that the integral transform using the universal sheaf from the derived category of the $K3$ surface to that of the moduli space is a $\PP^n$-functor 
\cite[Theorem 1.1]{generalized-deformations} (see also \cite{addington}).
Theorem \ref{cor-stability-of-E-hilbert-scheme-case}  is used in the 
construction of a $21$-dimensional moduli space $\fM$ of isomorphism classes of triples $(X,\eta,\A)$, where 
$X$ is a manifold of $K3^{[n]}$-type, 
$\eta:H^2(X,\Integers)\rightarrow\Lambda_{n}$ is an isometry with a fixed lattice $\Lambda_{n}$, and
$\A$ is a slope stable reflexive sheaf of Azumaya algebras over $X\times X$   \cite{torelli}. 
We prove a Torelli theorem for the pairs $(X,\A)$ deformation equivalent to $(S^{[n]},\SheafEnd(E))$, stating that if 
$(X',\A')$ is another such pair, and $\A$ and $\A'$ are both $\omega\boxplus\omega$-slope-stable, with respect to the same K\"{a}hler class $\omega$ on $X$, then $\A'$ is isomorphic to $\A$ or $\A^*$ (the latter is the same sheaf as $\A$, but with the dual multiplication) \cite[Theorem 1.11]{torelli}.
In \cite{generalized-deformations} we associate to a triple $(X,\eta,\A)$ in $\fM$ a pre-triangulated $K3$-category, resulting in a global generalized (non-commutative and gerby) deformation of the derived categories of coherent sheaves on $K3$ surfaces (which are associated to the triples $(S^{[n]},\eta,\SheafEnd(E))$ by the construction). 

% a Torelli theorem for the pairs $(S^{[n]},E)$ in
%\cite{torelli}, as well as in the geometric construction of generalized (non-commutative and gerby) deformations of $K3$ surfaces
%\cite{generalized-deformations}, both joint with Sukhendu Mehrotra.

\medskip
{\bf Acknowledgments:} This work was 
partially supported by NSA grant H98230-13-1-0239. I would like to thank the referee for his comments, which helped improve the exposition.
%*****************************************************************
%
%*****************************************************************
\section{Density}
Let $\Lambda$ be the $K3$ lattice, namely the unique even unimodular lattice of rank $22$ and signature $(3,19)$. 
A marked $K3$ surface $(S,\eta)$ consists of 
a $K3$ surface $S$ and an isometry $\eta:H^2(S,\Integers)\rightarrow \Lambda$.
Choose one of the two connected components  of the moduli space of isomorphism classes of marked  $K3$ surfaces, not necessarily projective, and denote it by $\fM_{K3}$.
Let $\Omega_{K3}$ be the corresponding period domain, and let
$P:\fM_{K3}\rightarrow \Omega_{K3}$ be the period map \cite{looijenga-peters}. 
The map $P$ is a local homeomorphism.
%and the fiber of $P$ over a period in  $\Omega_{K3}$ 
%consists of a single point, if and only if the line in $\Lambda\otimes_\Integers\ComplexNumbers$
%corresponding to the period is not orthogonal to any $-2$ class in $\Lambda$ \cite[Lemma 10.4]{looijenga-peters}. 

The signed isometry group $O^+(\Lambda)$ acts on $\fM_{K3}$, by 
$g(S,\eta)=(S,g\eta)$, and on $\Omega_{K3}$, and the period map is $O^+(\Lambda)$-equivariant.
Over $\fM_{K3}$ we have a universal family of $K3$ surfaces $\S\rightarrow \fM_{K3}$, since the automorphism group of a $K3$ surface acts faithfully on its degree $2$ cohomology. We get over $\fM_{K3}$ 
a universal Douady space $\S^{[n]}\rightarrow  \fM_{K3}$, as well as the fiber square of the latter 
$\S^{[n]}\times_{\fM_{K3}}\S^{[n]}\rightarrow \fM_{K3}$. 
Let $\overline{\U}$ be the ideal sheaf of the universal subscheme of $\S\times_{\fM_{K3}}\S^{[n]}$ 
and let $\pi_{ij}$ be the projection from
$\S^{[n]}\times_{\fM_{K3}}\S\times_{\fM_{K3}}\S^{[n]}$ onto the fiber product of the $i$-th and $j$-th factors.
The universal 
version $\E$ over $\S^{[n]}\times_{\fM_{K3}}\S^{[n]}$ of the rank $2n-2$ sheaf $E$
is the relative extension sheaf
\begin{equation}
\label{eq-equivariant-E}
\E \:= \ \SheafExt^1_{\pi_{13}}(\pi_{12}^*\overline{\U},\pi_{23}^*\overline{\U}).
\end{equation}
The universal family $\S\rightarrow \fM_{K3}$ is $O^+(\Lambda)$-equivariant, by the universal property of the universal family.
Hence, so are the universal Douady space and the universal subscheme. The sheaves $\overline{\U}$ 
and $\E$ are thus $O^+(\Lambda)$-equivariant as well. The following is a consequence of a density theorem of Verbitsky.

\begin{lem}
\label{lemma-W-contains-marked-pairs-of-non-maximal-Picard-rank}
Let $W\subset \fM_{K3}$ be a non-empty open $O^+(\Lambda)$-invariant subset.
Then $W$ contains every marked pair $(S,\eta)$ such that 
the rank of the Picard group of $S$ is $\leq 19$.
\end{lem}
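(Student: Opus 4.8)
The plan is to transport the problem from the moduli space $\fM_{K3}$ to the period domain $\Omega_{K3}$, where Verbitsky's ergodicity theorem applies, and then to transport the conclusion back using the weak Torelli theorem. The one substantial ingredient is the following consequence of Verbitsky's ergodicity theorem: the $O^+(\Lambda)$-orbit of a period point $p\in\Omega_{K3}$ is dense in $\Omega_{K3}$ whenever the algebraic lattice $\Lambda\cap p^{\perp}$ has non-maximal rank, i.e. rank at most $19$. Under the period map this algebraic lattice is identified with $\Pic(S)$ for any marked pair $(S,\eta)$ with $P(S,\eta)=p$ (by the Lefschetz $(1,1)$-theorem, the integral classes orthogonal to the period are exactly the integral $(1,1)$-classes), so the hypothesis $\rank(\Pic(S))\leq 19$ is precisely this non-maximality condition; the threshold $19=20-1$ is forced because $20$ is the maximal possible Picard rank of a $K3$ surface.

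First I would record the elementary properties of $P(W)$. Since $P$ is a local homeomorphism it is an open map, so $P(W)\subset\Omega_{K3}$ is open, and it is non-empty because $W\neq\emptyset$. Since $P$ is $O^+(\Lambda)$-equivariant and $W$ is $O^+(\Lambda)$-invariant, the identity $P(W)=P(gW)=gP(W)$ holds for every $g\in O^+(\Lambda)$, so $P(W)$ is $O^+(\Lambda)$-invariant. Now fix a marked pair $(S,\eta)$ with $\rank(\Pic(S))\leq 19$ and set $p:=P(S,\eta)$. By the density statement above the orbit $O^+(\Lambda)\cdot p$ is dense, so the non-empty open set $P(W)$ meets it: there is $g\in O^+(\Lambda)$ with $g\cdot p\in P(W)$. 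Invariance of $P(W)$ then yields $p\in P(W)$.

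It remains to lift the conclusion $p\in P(W)$ back to $\fM_{K3}$, which is where the non-injectivity of $P$ must be handled. By definition of $P(W)$ there is a marked pair $(S',\eta')\in W$ with $P(S',\eta')=p=P(S,\eta)$. Equality of periods means that $\phi:=(\eta')^{-1}\circ\eta$ is an integral isometry $H^2(S,\Integers)\to H^2(S',\Integers)$ carrying $H^{2,0}(S)$ to $H^{2,0}(S')$, hence a Hodge isometry. By the weak Torelli theorem there is an isomorphism $f\colon S\to S'$, and then $(S',\eta')$ and $(S,\eta'\circ f^*)$ represent the same point of $\fM_{K3}$. The two markings $\eta'\circ f^*$ and $\eta$ of $S$ induce the same period, so $g':=(\eta'\circ f^*)\circ\eta^{-1}$ lies in $O(\Lambda)$; since both markings lie in the fixed component $\fM_{K3}$, in fact $g'\in O^+(\Lambda)$, and $(S',\eta')=g'\cdot(S,\eta)$. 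Invariance of $W$ finally gives $(S,\eta)=(g')^{-1}\cdot(S',\eta')\in W$.

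The main obstacle is the density input itself: the assertion that non-maximal Picard rank forces a dense $O^+(\Lambda)$-orbit is the ergodic-theoretic heart of the argument and is exactly the content one imports from Verbitsky's ergodicity theorem (via the Moore and Howe--Moore circle of ideas applied to the arithmetic group $O^+(\Lambda)$ acting on the period domain of signature $(2,\rank(T)-2)$ attached to the transcendental lattice $T=(\Lambda\cap p^{\perp})^{\perp}$; here $\rank(T)\geq 3$ exactly when $\rank(\Pic(S))\leq 19$, which is what makes the relevant stabilizer non-compact). Everything else — the openness and invariance of $P(W)$, and the Torelli-based lift back to $\fM_{K3}$ — is routine once that theorem is in hand.
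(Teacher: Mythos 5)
Your proof is correct and follows essentially the paper's route: both transport the problem to $\Omega_{K3}$ (openness, non-emptiness, and $O^+(\Lambda)$-invariance of $P(W)$), invoke Verbitsky's ergodicity theorem for period points of Picard rank $\leq 19$ to place the whole orbit inside $P(W)$, and then lift back along $P$ via Torelli. The one place you differ is how the lift is justified. The paper cites the Global Torelli theorem in the packaged form that the stabilizer of a period point in $O^+(\Lambda)$ acts transitively on the fiber of $P$ over it (Burns--Rapoport; Looijenga--Peters, Lemma 10.4), which yields $W=P^{-1}(P(W))$ in one stroke; you instead unpack this from the weak Torelli theorem, producing an isomorphism $f\colon S\to S'$ and the isometry $g'$ relating the two markings. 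The only step you assert without proof is that $g'\in O^+(\Lambda)$ ``since both markings lie in the fixed component.'' This is true but not automatic: it needs the standard fact that a marked pair determines an orientation of the positive three-planes in $\Lambda\otimes\RealNumbers$, that this orientation class is locally constant (hence constant on the connected component $\fM_{K3}$), and that consequently any $g\in O(\Lambda)$ carrying a point of $\fM_{K3}$ into $\fM_{K3}$ preserves the class and is therefore signed. That is exactly the content subsumed by the paper's citation, so you should either spell out this orientation argument or cite the fiber-transitivity statement directly. (Also a harmless slip: the marking on $S$ induced by $f$ is $\eta'\circ(f^*)^{-1}$, not $\eta'\circ f^*$.)
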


\begin{proof}
The image $P(W)$ is an open $O^+(\Lambda)$-invariant subset of $\Omega_{K3}$. 
The stabilizer of $P(S,\eta)$ in $O^+(\Lambda)$ acts transitively on the fiber of $P$ over $P(S,\eta)$, 
by the Global Torelli Theorem \cite{burns-rapoport}
(see also \cite[Lemma 10.4]{looijenga-peters}). Hence, $W=P^{-1}(P(W))$. 
If the rank of $\Pic(S)$ is less than or equal to $19$, then the $O^+(\Lambda)$-orbit of $P(S,\eta)$ 
is dense in $\Omega_{K3}$, by \cite{verbitsky-ergodic},
so it  intersects $P(W)$ and is thus contained in $P(W)$. 
Hence, the $O^+(\Lambda)$-orbit of $(S,\eta)$ is contained in $W$.
\end{proof}
%*****************************************************************
%
%*****************************************************************
\section{A canonical subsheaf}
\label{sec-a-canonical-subsheaf}

A coherent sheaf $F$ on a complex manifold of dimension $d$ is said to be {\em pure of codimension} $c$
if the support of every subsheaf of $F$ has codimension $c$.
\begin{lem}
\label{lemma-reflexive-criterion}
\cite[Cor. 1.5]{hartshorne-reflexive}
Let $0\rightarrow F'\rightarrow F\rightarrow Q\rightarrow 0$ be an exact sequence of coherent sheaves on a complex manifold.
\begin{enumerate}
\item
\label{lemma-item-iff}
Assume that $F$ is reflexive. Then $F'$ is reflexive, if and only if either the torsion subsheaf of $Q$ is pure of codimension $1$ or $Q$ is torsion free.
\item
\label{lemma-item-torsion-free-F}
If $F$ is torsion free and $F'$ is reflexive, then either $Q$ is torsion free, or the torsion subsheaf of $Q$ is pure of codimension $1$.
\end{enumerate}
\end{lem}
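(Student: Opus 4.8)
The plan is to translate reflexivity and torsion-freeness into depth conditions on the stalks and then to compare depths across the given short exact sequence by means of local cohomology. Since $X$ is a complex manifold, every local ring $\StructureSheaf{X,x}$ is a regular local ring of dimension $\mathrm{codim}(x)$, so the commutative-algebra characterizations behind \cite{hartshorne-reflexive} apply directly: a coherent sheaf is torsion-free if and only if it satisfies Serre's condition $S_1$, i.e.\ $\mathrm{depth}\,F_x\geq\min(1,\dim\StructureSheaf{X,x})$ at every point $x$, and it is reflexive if and only if it satisfies $S_2$, i.e.\ $\mathrm{depth}\,F_x\geq\min(2,\dim\StructureSheaf{X,x})$ at every $x$. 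I would take these two equivalences as the working reformulations of the notions involved, noting in particular that a subsheaf of a torsion-free sheaf is again torsion-free, so $F'$ is automatically torsion-free in both parts.

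Next I would reformulate the condition imposed on $Q$. Every associated point of $Q$ of positive codimension is an associated point of the torsion subsheaf $T(Q)$, while $Q/T(Q)$ is torsion-free and contributes only the generic points; hence the dichotomy ``either $T(Q)$ is pure of codimension $1$ or $Q$ is torsion-free'' is equivalent to the single condition that $Q$ has no associated point of codimension $\geq 2$, which in terms of depth reads $\mathrm{depth}\,Q_x\geq 1$ whenever $\dim\StructureSheaf{X,x}\geq 2$. After this reduction the entire lemma becomes a statement comparing the depths of $F'_x$, $F_x$ and $Q_x$.

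The comparison is then the standard three-term depth lemma. Fixing $x$, setting $c=\dim\StructureSheaf{X,x}$ and writing $a,b,q$ for the depths of $F'_x,F_x,Q_x$, the long exact sequence of local cohomology groups $H^i_{\mathfrak{m}_x}$ attached to $0\rightarrow F'_x\rightarrow F_x\rightarrow Q_x\rightarrow 0$ yields $a\geq\min(b,q+1)$ and $q\geq\min(a-1,b)$. For the first assertion, reflexivity of $F$ gives $b\geq\min(2,c)$: if $Q$ has no associated point of codimension $\geq 2$ then $q\geq 1$ for $c\geq 2$, whence $a\geq\min(b,q+1)\geq 2$, while the cases $c\leq 1$ are covered by the torsion-freeness of $F'$, so $F'$ satisfies $S_2$ and is therefore reflexive; conversely reflexivity of $F'$ gives $a\geq 2$ for $c\geq 2$, and then $q\geq\min(a-1,b)\geq 1$, which is exactly the dichotomy. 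For the second assertion one repeats this last computation with the weaker bound $b\geq\min(1,c)$ coming from torsion-freeness of $F$: for $c\geq 2$ one still finds $q\geq\min(a-1,b)\geq\min(1,1)=1$.

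The genuinely substantive inputs — and the steps I expect to require the most care — are the equivalence ``reflexive $\Leftrightarrow S_2$'' in the analytic setting and the translation of the torsion dichotomy into the vanishing of associated points of codimension $\geq 2$; the inequality chase itself is entirely formal once these are in place. I would therefore devote the main effort to pinning down those two translations (the former via the regularity of $\StructureSheaf{X,x}$ together with the codimension-two extension property) and present the depth bookkeeping only briefly.
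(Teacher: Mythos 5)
Your overall route---turn reflexivity and torsion-freeness into Serre-type depth conditions and chase the inequalities $a\geq\min(b,q+1)$ and $q\geq\min(a-1,b)$ through the long exact sequence of local cohomology---is quite different from the paper's: the paper does not prove part (1) at all (it is quoted from [H, Cor.~1.5]), and it deduces part (2) from part (1) in one line, by applying (1) to $0\rightarrow F'\rightarrow F^{**}\rightarrow F^{**}/F'\rightarrow 0$ and using the injection $Q\cong F/F'\hookrightarrow F^{**}/F'$. However, your argument as written has a genuine gap, and it sits exactly in the two ``translations'' you yourself flagged as the substantive inputs. On a complex manifold every point is closed, with $\dim\StructureSheaf{X,x}=\dim X$, so your conditions quantify only over closed points, and depth at a closed point only controls local cohomology supported at that point, not along positive-dimensional subvarieties. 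Both translations are then false. Concretely, take $X$ a $3$-dimensional manifold, $C\subset X$ a smooth curve, and the sequence $0\rightarrow I_C\rightarrow\StructureSheaf{X}\rightarrow\StructureSheaf{C}\rightarrow 0$. Here $Q=\StructureSheaf{C}$ has $\mathrm{depth}\,Q_x\geq 1$ at every point, so it satisfies your reformulated hypothesis on $Q$; your part (1) argument then gives $\mathrm{depth}\,(I_C)_x\geq\min(3,1+1)=2$ at every point, whence ``$I_C$ satisfies $S_2$ and is therefore reflexive.'' But $I_C$ is \emph{not} reflexive ($I_C^{**}\cong\StructureSheaf{X}$, since homomorphisms and functions extend across the codimension-$2$ locus $C$), and the lemma itself correctly forbids this: the torsion subsheaf of $\StructureSheaf{C}$ is all of $\StructureSheaf{C}$, which is pure of codimension $2$, not $1$. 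So under the closed-point reading, both ``reflexive $\Leftrightarrow S_2$'' and ``the dichotomy on $Q$ $\Leftrightarrow\mathrm{depth}\,Q_x\geq 1$'' fail, and the proof derives a false conclusion.

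The repair is to impose every condition at every prime of every stalk (equivalently, along every germ of analytic subvariety), not merely at points of $X$: for each $x$ and each prime $p\subset\StructureSheaf{X,x}$, localize the sequence at $p$ and run your two inequalities over the regular local ring $(\StructureSheaf{X,x})_p$. At that level the needed equivalences do hold: torsion-free means no associated prime of height $\geq 1$; reflexive means torsion-free together with $\mathrm{depth}\,(F_x)_p\geq 2$ at every prime of height $\geq 2$ (this is where regularity of the stalks and the codimension-two extension property genuinely enter); and the dichotomy on $Q$ means no stalk of $Q$ has an associated prime of height $\geq 2$ (your reduction via $T(Q)$ and $Q/T(Q)$ is correct at this level---in the $I_C$ example above it is the height-$2$ prime of the curve, invisible to closed-point depth, that violates it). With those corrections your inequality chase goes through verbatim and proves both parts. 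Note that you would then be re-proving the cited result [H, Cor.~1.5] rather than invoking it: this buys a self-contained and uniform treatment of both parts, but all the substance lies in the deferred commutative algebra, whereas the paper's own proof burden is only the one-line deduction of (2) from the quoted (1).
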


\begin{proof}
Part (\ref{lemma-item-iff}) is proven in \cite[Cor. 1.5]{hartshorne-reflexive}. 
Part (\ref{lemma-item-torsion-free-F}): The torsion subsheaf of $F^{**}/F'$ either vanishes, or is pure of codimension $1$,
by Part (\ref{lemma-item-iff}), and the composition $Q\cong F/F'\rightarrow F^{**}/F'$ is injective.
\end{proof}

Let $\iota:\Z\rightarrow S\times S^{[n]}$ be the inclusion of the universal subscheme and let $I_\Z$ be its ideal sheaf.
Let $q_i$ be the projection from $S\times S^{[n]}$ to the $i$-th factor, $i=1,2$.
We get the split short exact sequence of locally free sheaves
\begin{equation}
\label{eq-u}
0\rightarrow \StructureSheaf{S^{[n]}}\RightArrowOf{u} q_{2,*}\iota_*\StructureSheaf{\Z} \rightarrow A_0\rightarrow 0.
\end{equation}
We have $c_1(A_0)=c_1(q_{2,*}\iota_*\StructureSheaf{\Z})=-\delta$, where $2\delta\in H^2(S^{[n]},\Integers)$ is the class of the divisor of non-reduced subschemes \cite[Sec. 5]{ELG}.
Let $\I\subset S^{[n]}\times S^{[n]}$ be the incidence subvariety consisting of pairs $(Z_1,Z_2)$ of length $n$ subschemes,
which are not disjoint, $Z_1\cap Z_2\neq \emptyset$.
Let $p_i$ be the projection from $S^{[n]}\times S^{[n]}$ to the $i$-th factor, $i=1,2$. 

\begin{prop}
\label{prop-canonical-subsheaf-of-E}
The sheaf $E$, given in (\ref{eq-E}), fits in the left exact sequence 
\begin{equation}
\label{eq-canonical-subsheaf}
0\rightarrow p_2^*A_0\RightArrowOf{h} E \RightArrowOf{j} p_1^*A_0^*
\end{equation}
and the co-kernel of $j$ is supported, set theoretically, on $\I$.
\end{prop}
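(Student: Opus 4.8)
The plan is to realize both arrows as the functorial maps on relative extension sheaves induced by the two tautological inclusions of ideal sheaves into $\StructureSheaf{}$, and then to pin down the source and target by base change and relative duality. Pulling back the short exact sequence $0\to\U\to\StructureSheaf{S\times S^{[n]}}\to\iota_*\StructureSheaf{\Z}\to 0$ along the flat maps $\pi_{12}$ and $\pi_{23}$ gives two short exact sequences on $S^{[n]}\times S\times S^{[n]}$ with middle term $\StructureSheaf{}$. The inclusion $\pi_{12}^*\U\hookrightarrow\StructureSheaf{}$ induces, by functoriality of $\SheafExt^1_{\pi_{13}}(-,\pi_{23}^*\U)$, a map $h\colon \SheafExt^1_{\pi_{13}}(\StructureSheaf{},\pi_{23}^*\U)\to E$, while $\pi_{23}^*\U\hookrightarrow\StructureSheaf{}$ induces $j\colon E\to\SheafExt^1_{\pi_{13}}(\pi_{12}^*\U,\StructureSheaf{})$. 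Bifunctoriality factors the composite $j\circ h$ through $\SheafExt^1_{\pi_{13}}(\StructureSheaf{},\StructureSheaf{})=R^1\pi_{13,*}\StructureSheaf{}$, which vanishes because $H^1(\StructureSheaf{S})=0$; hence $0\to p_2^*A_0\RightArrowOf{h}E\RightArrowOf{j}p_1^*A_0^*$ will be a complex once the two identifications below are in place.

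For the source, flat base change along the Cartesian square relating $\pi_{13},\pi_{23}$ to $p_2,q_2$ gives $\SheafExt^i_{\pi_{13}}(\StructureSheaf{},\pi_{23}^*\U)=R^i\pi_{13,*}\pi_{23}^*\U=p_2^*R^iq_{2,*}\U$. Applying $Rq_{2,*}$ to $0\to\U\to\StructureSheaf{}\to\iota_*\StructureSheaf{\Z}\to 0$ and using $R^1q_{2,*}\StructureSheaf{S\times S^{[n]}}=0$ identifies $R^1q_{2,*}\U$ with the cokernel $A_0$ of $u$ (and $R^2q_{2,*}\U$ with $\StructureSheaf{S^{[n]}}$), so the source of $h$ is $p_2^*A_0$.

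For the target I would use relative Grothendieck duality for the smooth projective morphism $\pi_{13}$, whose fibres are the $K3$ surface $S$ and whose relative dualizing sheaf is trivial: $R\pi_{13,*}R\SheafHom(\pi_{12}^*\U,\StructureSheaf{})\cong R\SheafHom(R\pi_{13,*}\pi_{12}^*\U,\StructureSheaf{})[-2]$. By the same base change applied to the first factor, $R\pi_{13,*}\pi_{12}^*\U=p_1^*Rq_{2,*}\U$ has cohomology sheaves $p_1^*A_0$ in degree $1$ and $\StructureSheaf{}$ in degree $2$; dualizing this two-term object and shifting, the degree-$2$ contribution lands in $\SheafExt^0$ while $\SheafExt^1_{\pi_{13}}(\pi_{12}^*\U,\StructureSheaf{})$ is exactly $p_1^*A_0^*$, the target of $j$. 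I would then record the generic picture over the open set $U=(S^{[n]}\times S^{[n]})\setminus\I$ of disjoint pairs, where both ideal sheaves are flat with locally free relative Ext and cohomology-and-base-change applies: the fibrewise computation of $\Ext^1_S(I_{Z_1},I_{Z_2})$ from the two defining sequences, using $\Hom_S(I_{Z_1},I_{Z_2})=\Ext^2_S(I_{Z_1},I_{Z_2})=0$, the identification of $\Ext^1_S(I_{Z_1},\StructureSheaf{S})\cong H^1(I_{Z_1})^*$ with the fibre of $A_0^*$ at $Z_1$, and of the cokernel of $\ComplexNumbers\to H^0(\StructureSheaf{Z_2})$ with the fibre of $A_0$ at $Z_2$, shows that over $U$ the sequence is short exact, with $h$ injective, $\ker j=\Im h$, and $j$ surjective (the last because $\Ext^1_S(I_{Z_1},\StructureSheaf{Z_2})=H^1(\StructureSheaf{Z_2})=0$ for disjoint $Z_2$).

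Finally I would globalize using reflexivity and the fact that $\I$ has codimension $2$. The kernel of $h$ is a torsion subsheaf of the locally free sheaf $p_2^*A_0$ supported on $\I$, hence zero, so $h$ is injective. Since $E/\ker(j)\cong\Im(j)$ embeds in the torsion free sheaf $p_1^*A_0^*$, Lemma \ref{lemma-reflexive-criterion}(\ref{lemma-item-iff}) shows $\ker(j)$ is reflexive; as $\Im(h)=p_2^*A_0$ is reflexive of the same rank and the inclusion $\Im(h)\subseteq\ker(j)$ is an isomorphism in codimension one, it is an isomorphism, giving exactness at $E$. The cokernel of $j$ is supported on $\I$ because $j$ is surjective over $U$. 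The step I expect to be most delicate is the relative-duality identification of $\SheafExt^1_{\pi_{13}}(\pi_{12}^*\U,\StructureSheaf{})$ with $p_1^*A_0^*$: there I must dualize a two-term complex with a possibly nonsplit degree-$2$ cohomology and verify that base change is compatible with the shift, so that the $\StructureSheaf{}$-contribution does not leak into degree $1$.
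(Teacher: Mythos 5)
Your proposal is correct, and it reaches the statement by a route that shares the paper's two main tools (Grothendieck--Verdier duality for $\pi_{13}$ and Lemma \ref{lemma-reflexive-criterion} together with the fact that $\I$ has codimension $2$) but organizes the argument quite differently. The paper's proof hangs on a single long exact sequence: it applies $R\SheafHom(\pi_{12}^*\U,-)$ to the structure sequence $0\rightarrow\pi_{23}^*\U\rightarrow\StructureSheaf{}\rightarrow\pi_{23}^*\iota_*\StructureSheaf{\Z}\rightarrow 0$ and pushes forward, so that $E$ sits inside a long exact sequence; exactness at $E$ is then automatic, and the cokernel of $j$ is identified with the kernel of the next map, a subsheaf of $\SheafExt^1_{\pi_{13}}(\pi_{12}^*\U,\pi_{23}^*\iota_*\StructureSheaf{\Z})$, which is shown to be supported on $\I$. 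The price the paper pays is that the source of $h$ is identified with $p_2^*A_0$ only away from $\I$, and the image of $h$ must then be extended across $\I$ by the saturation criterion of Lemma \ref{lemma-reflexive-criterion} (\ref{lemma-item-iff}). You instead build both arrows functorially, kill $j\circ h$ through the vanishing of $\SheafExt^1_{\pi_{13}}(\StructureSheaf{},\StructureSheaf{})$, identify the source globally and cleanly by flat base change ($R^1\pi_{13,*}\pi_{23}^*\U\cong p_2^*R^1q_{2,*}\U\cong p_2^*A_0$), and then earn exactness at $E$ and the support statement for $\coker(j)$ by a fibrewise computation over the disjoint locus plus a reflexivity/codimension-two extension across $\I$. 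Your duality step is sound, and the ``leaking'' you worry about cannot occur: both cohomology sheaves of $K:=R\pi_{13,*}\pi_{12}^*\U$ are locally free, so dualizing the truncation triangle $p_1^*A_0[-1]\rightarrow K\rightarrow\StructureSheaf{S^{[n]}\times S^{[n]}}[-2]$ gives $\SheafExt^0_{\pi_{13}}(\pi_{12}^*\U,\StructureSheaf{})\cong\StructureSheaf{S^{[n]}\times S^{[n]}}$ and $\SheafExt^1_{\pi_{13}}(\pi_{12}^*\U,\StructureSheaf{})\cong p_1^*A_0^*$ with no interference (the paper's version of this is a term-by-term dualization of the first structure sequence, with the same input). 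What your route buys is symmetry and a globally defined $h$ from the start; what it costs is having to invoke base change for relative $\SheafExt$ over the disjoint locus (justified there because $\Ext^0$ and $\Ext^2$ vanish fibrewise and $\Ext^1$ has constant dimension $2n-2$) and the standard fact that an inclusion of reflexive sheaves which is an isomorphism off a codimension-two subset is an isomorphism --- two things the paper gets for free from its long exact sequence.
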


\begin{proof}
Set $\StructureSheaf{}:=\StructureSheaf{S^{[n]}\times S\times S^{[n]}}$.
Apply the functor $R\SheafHom(\pi_{12}^*I_\Z,\bullet)$ to the short exact sequence
\[
0\rightarrow \pi_{23}^*I_\Z\rightarrow \StructureSheaf{}\rightarrow \pi_{23}^*\iota_*\StructureSheaf{\Z}\rightarrow 0
\]
to get the exact triangle
\[
R\SheafHom(\pi_{12}^*I_\Z,\pi_{23}^*I_\Z)\rightarrow 
R\SheafHom(\pi_{12}^*I_\Z,\StructureSheaf{})\rightarrow 
R\SheafHom(\pi_{12}^*I_\Z,\pi_{23}^*\iota_*\StructureSheaf{\Z}).
%\rightarrow R\SheafHom(\pi_{12}^*I_\Z,\pi_{23}^*I_\Z)[1].
\]
Note the isomorphism $\SheafHom(\pi_{12}^*I_\Z,\StructureSheaf{})\cong \StructureSheaf{}$.
Applying the functor $R\pi_{13,*}$ and taking sheaf cohomology of the resulting exact triangle we get the long exact sequence
\begin{eqnarray}
\label{eq-long-exact-sequence-of-canonical-subsheaf}
0&\rightarrow& \StructureSheaf{S^{[n]}\times S^{[n]}}\rightarrow
\pi_{13,*}\SheafHom(\pi_{12}^*I_\Z,\pi_{23}^*\iota_*\StructureSheaf{\Z})
\RightArrowOf{\tilde{h}}
E
\RightArrowOf{j}
\SheafExt^1_{\pi_{13}}(\pi_{12}^* I_\Z,\StructureSheaf{})\rightarrow 
\\
\nonumber
&&
\SheafExt^1_{\pi_{13}}(\pi_{12}^*I_\Z,\pi_{23}^*\iota_*\StructureSheaf{\Z})
\RightArrowOf{k} \SheafExt^2_{\pi_{13}}(\pi_{12}^*I_\Z,\pi_{23}^*I_\Z)\rightarrow \cdots
\end{eqnarray}
Away from $\I$  the natural homomorphism
\[
\pi_{13,*}\SheafHom(\StructureSheaf{},\pi_{23}^*\iota_*\StructureSheaf{\Z})
\rightarrow
\pi_{13,*}\SheafHom(\pi_{12}^*I_\Z,\pi_{23}^*\iota_*\StructureSheaf{\Z})
\]
is an isomorphism, 
and the left hand sheaf is naturally isomorphic to $p_2^*q_{2,*}\iota_*\StructureSheaf{\Z}$.
Composing the above displayed homomorphism with $\tilde{h}$ 
we get the injective homomorphism $h$ in Equation (\ref{eq-canonical-subsheaf}).
The exactness of the sequence (\ref{eq-long-exact-sequence-of-canonical-subsheaf}) 
implies that the image of $h$ is saturated, away from $\I$.
The image of $h$ must be a saturated subsheaf of $E$ everywhere, by Lemma \ref{lemma-reflexive-criterion} (\ref{lemma-item-iff}), 
since the image is locally free and the codimension of $\I$ is $2$.

The relative extension sheaves 
$\SheafExt^1_{\pi_{13,*}}(\StructureSheaf{},\StructureSheaf{})$ and
$\SheafExt^2_{\pi_{13,*}}(\pi_{12}^*I_\Z,\StructureSheaf{})$ both vanish.
Hence, we get the short exact sequence
\[
0\rightarrow \SheafExt^1_{\pi_{13}}(\pi_{12}^*I_\Z,\StructureSheaf{})\rightarrow
\SheafExt^2_{\pi_{13}}(\pi_{12}^*\iota_*\StructureSheaf{\Z},\StructureSheaf{})\RightArrowOf{\tilde{u}^*}
\SheafExt^2_{\pi_{13}}(\StructureSheaf{},\StructureSheaf{})\rightarrow 0.
\]
Grothendieck-Verdier Duality, combined with the triviality of the relative canonical line bundle $\omega_{\pi_{13}}$, 
identifies the pullback $p_1^*(u^*)$  of the 
dual of the homomorphism $u$ in Equation (\ref{eq-u})
with the of the homomorphism $\tilde{u}^*$ above. Hence, $\SheafExt^1_{\pi_{13}}(\pi_{12}^*I_\Z,\StructureSheaf{})$ is isomorphic to
the kernel $p_1^*A_0^*$ of $p_1^*(u^*)$. Using the latter isomorphism we obtain the homomorphism $j$ in Equation
(\ref{eq-canonical-subsheaf}) from the homomorphism $j$ in the long exact sequence 
(\ref{eq-long-exact-sequence-of-canonical-subsheaf}).
The co-kernel of $j$ is equal to the kernel of $k$ in (\ref{eq-long-exact-sequence-of-canonical-subsheaf}) from the sheaf 
$\SheafExt^1_{\pi_{13}}(\pi_{12}^*I_\Z,\pi_{23}^*\iota_*\StructureSheaf{\Z})$ to 
$\SheafExt^2_{\pi_{13}}(\pi_{12}^*I_\Z,\pi_{23}^*I_\Z)$.
The former is isomorphic to $\SheafExt^2_{\pi_{13}}(\pi_{12}^*\iota_*\StructureSheaf{\Z},\pi_{23}^*\iota_*\StructureSheaf{\Z})$ and is
thus supported set theoretically on $\I$ and the latter is supported on the diagonal.
Hence, the co-kernel of $j$ is supported on $\I$.
\end{proof}

%*****************************************************************
%
%*****************************************************************
\section{Blow-up of the diagonal and restriction to the exceptional divisor}
\label{sec-restriction-to-the-diagonal}
We recall  one more crucial property of the relative extension sheaf $E$ given in (\ref{eq-E}). 
Let $\beta:Y\rightarrow S^{[n]}\times S^{[n]}$ be the blow-up of $S^{[n]}\times S^{[n]}$
along the diagonal $\Delta$. Denote by $\widetilde{\Delta}$ the exceptional divisor in $Y$. 
Note that $\widetilde{\Delta}$ is naturally isomorphic to $\PP(T^*\Delta)$.
Set
\[
V \ := \ \beta^*{E}(\widetilde{\Delta})/{\rm torsion}.
\]
Let $\pi:\widetilde{\Delta}\rightarrow \Delta$ be the restriction of $\beta$.
Let $\ell\subset \pi^*T^*\Delta$ be the tautological line sub-bundle. The
restriction of $\StructureSheaf{\widetilde{\Delta}}(\widetilde{\Delta})$ to $\widetilde{\Delta}$
is isomorphic to $\ell$. Let $\ell^\perp$ be the sub-bundle  of $\pi^*T^*\Delta$ 
symplectic-othogonal to $\ell$. Note that the symplectic structure on $T^*\Delta$ induces one on
$\ell^\perp/\ell$.

\begin{lem}
\label{lemma-simplicity}
The vector space $\End(\ell^\perp/\ell)$ is one-dimensional.
\end{lem}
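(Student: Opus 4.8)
The plan is to reduce the statement to the simplicity of a null correlation bundle on a single projective space, and then to globalize over $\Delta$. Recall that $\Delta\cong S^{[n]}$ is a compact connected holomorphic symplectic manifold of dimension $2n$, so $T^*\Delta$ is a rank $2n$ bundle carrying a fiberwise symplectic form and $\widetilde{\Delta}=\PP(T^*\Delta)\RightArrowOf{\pi}\Delta$ is a $\PP^{2n-1}$-bundle. Fix $x\in\Delta$, write $W:=T^*_x\Delta$ for the corresponding symplectic vector space, and let $F:=\pi^{-1}(x)\cong\PP(W)$. The first step is to identify the restriction $N_x:=(\ell^\perp/\ell)|_F$.

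On $F=\PP(W)$ the tautological inclusion is $\ell=\StructureSheaf{F}(-1)\subset W\otimes\StructureSheaf{F}$, and contraction with the symplectic form yields a surjection $W\otimes\StructureSheaf{F}\rightarrow\StructureSheaf{F}(1)$, $w\mapsto\omega(w,\cdot)|_\ell$, whose kernel is $\ell^\perp$. Comparing with the Euler sequence $0\rightarrow\StructureSheaf{F}(-1)\rightarrow W\otimes\StructureSheaf{F}\rightarrow T_F(-1)\rightarrow 0$ and using that every line is isotropic (so $\ell\subset\ell^\perp$), the map $W\otimes\StructureSheaf{F}\rightarrow\StructureSheaf{F}(1)$ factors through $T_F(-1)$ and I obtain
\begin{equation*}
0\rightarrow N_x\rightarrow T_F(-1)\rightarrow\StructureSheaf{F}(1)\rightarrow 0 .
\end{equation*}
Twisting by $\StructureSheaf{F}(1)$ exhibits $N_x(1)$ as the kernel of the surjection $T_F\rightarrow\StructureSheaf{F}(2)$ determined by the nondegenerate skew form $\omega\in H^0(F,\Omega^1_F(2))\cong\wedge^2 W^*$. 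Hence $N_x$ is the null correlation bundle on $\PP^{2n-1}$; in particular it has rank $2n-2$ and $c_1=0$, and its symplectic self-duality $N_x\cong N_x^*$ is visible from dualizing the sequence above.

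The crucial step is that the null correlation bundle is slope stable, hence simple, so that $\End(N_x)=\ComplexNumbers$ for every $x$. I would invoke this classical fact; if a self-contained argument is wanted, simplicity can be checked directly by feeding the two defining sequences, together with Bott vanishing on $\PP^{2n-1}$, into the computation of $H^0(N_x\otimes N_x^*)$ via the decomposition induced by $N_x\cong N_x^*$. This is the heart of the matter and the step I expect to be the main obstacle.

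Finally I globalize. Since $h^0(F,\SheafEnd(N_x))=1$ is independent of $x$, cohomology and base change (Grauert) show that $\pi_*\SheafEnd(\ell^\perp/\ell)$ is locally free of rank $1$; the global section $\mathrm{id}$ trivializes it, so $\pi_*\SheafEnd(\ell^\perp/\ell)\cong\StructureSheaf{\Delta}$. As $\Delta\cong S^{[n]}$ is compact and connected, $H^0(\Delta,\StructureSheaf{\Delta})=\ComplexNumbers$, whence
\begin{equation*}
\End(\ell^\perp/\ell)=H^0(\widetilde{\Delta},\SheafEnd(\ell^\perp/\ell))=H^0(\Delta,\pi_*\SheafEnd(\ell^\perp/\ell))=\ComplexNumbers ,
\end{equation*}
as claimed. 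Equivalently, one argues by hand: any endomorphism restricts to a scalar on each fiber by fiberwise simplicity, its trace is a global holomorphic function on the compact connected $\widetilde{\Delta}$ and hence a constant $c$, and then $\phi-\tfrac{c}{2n-2}\,\mathrm{id}$ vanishes on every fiber, so vanishes identically.
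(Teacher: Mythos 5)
Your proof is correct and follows essentially the same route as the paper: fiberwise simplicity of $\ell^\perp/\ell$ on the $\PP^{2n-1}$ fibers of $\pi$, followed by the push-forward/base-change step identifying $\End(\ell^\perp/\ell)$ with $H^0(S^{[n]},\pi_*\SheafEnd(\ell^\perp/\ell))=H^0(S^{[n]},\StructureSheaf{S^{[n]}})=\ComplexNumbers$. The only difference is the provenance of the fiberwise input: the paper simply cites \cite[Lemma 7.3 (2)]{torelli} for simplicity on each fiber, whereas you identify the fiber restriction concretely as the null correlation bundle on $\PP^{2n-1}$ and invoke its known stability/simplicity, which is precisely the content of that citation.
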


\begin{proof}
The restriction of $\ell^\perp/\ell$ to each fiber of $\pi$ is simple, by \cite[Lemma 7.3 (2)]{torelli}.
Hence, the sheaf $\pi_*\SheafEnd(\ell^\perp/\ell)$ is the trivial line-bundle over $S^{[n]}$.
The statement follows from the isomorphism
$H^0(\widetilde{\Delta},\SheafEnd(\ell^\perp/\ell))=H^0(S^{[n]},\pi_*\SheafEnd(\ell^\perp/\ell))$.
\end{proof}

\begin{prop}
\label{prop-V}
$V$ is locally free. The restriction of $V$ to $\widetilde{\Delta}$ is isomorphic to $\ell^\perp/\ell$. 
\end{prop}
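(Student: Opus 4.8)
The plan is to prove two things: that $V$ is locally free (equivalently reflexive of full rank, since it is a quotient of a sheaf on a smooth variety by its torsion subsheaf) and that its restriction to $\widetilde{\Delta}$ is $\ell^\perp/\ell$. The natural strategy is to compute the fiber of $V$ along $\widetilde{\Delta}$ directly and show it has the expected rank $2n-2$ everywhere, so that local freeness follows by the constancy of fiber dimension for the torsion-free sheaf $V$.

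First I would analyze the local structure of $E$ near the diagonal. Away from $\Delta$ the sheaf $E$ is locally free of rank $2n-2$, and over the open locus where $(I_{Z_1},I_{Z_2})$ are disjoint the fiber is $\Ext^1_S(I_{Z_1},I_{Z_2})$. The key point is to understand the behavior along $\Delta$: there the symmetry $\Ext^1_S(I_Z,I_Z)\cong T_{I_Z}S^{[n]}$ carries the holomorphic symplectic form, and the relevant infinitesimal data is the pairing on the cotangent directions transverse to $\Delta$. The blow-up $\beta$ and the twist by $\StructureSheaf{Y}(\widetilde{\Delta})$ are designed precisely to resolve this: twisting by $\widetilde{\Delta}$ compensates for the order of vanishing that the extension classes acquire as the two subschemes collide, so that $\beta^*E(\widetilde{\Delta})$ has a well-defined, nonzero restriction to $\widetilde{\Delta}$ even though $\beta^*E$ itself degenerates there.

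Next I would identify the restriction of $V$ to $\widetilde{\Delta}\cong \PP(T^*\Delta)$. Using the identification $\StructureSheaf{\widetilde{\Delta}}(\widetilde{\Delta})\cong\ell$ recorded before Lemma \ref{lemma-simplicity}, the twist by $\widetilde{\Delta}$ contributes the tautological line $\ell$ upon restriction. The claim $\restricted{V}{\widetilde{\Delta}}\cong\ell^\perp/\ell$ then amounts to checking that the leading term of the family of extension groups, as one degenerates to a first-order neighborhood of a point of $\Delta$ in the normal direction determined by a cotangent line $\ell\subset\pi^*T^*\Delta$, is exactly the symplectic quotient $\ell^\perp/\ell$. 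I would carry this out fiberwise over $\Delta$: fix $I_Z\in S^{[n]}$, work in the formal or analytic neighborhood of $(I_Z,I_Z)$, and compute the associated graded of the relevant $\SheafExt^1$ with respect to the filtration induced by the exceptional divisor, matching it against the symplectic reduction $\ell^\perp/\ell$ of $T^*_{I_Z}S^{[n]}$. The canonical subsheaf sequence of Proposition \ref{prop-canonical-subsheaf-of-E} should help organize this computation, since $p_2^*A_0$ and $p_1^*A_0^*$ control the two "halves" of $E$ and their interaction along $\Delta$ produces the symplectic pairing.

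The main obstacle I expect is the second point: verifying that the degeneration of $\beta^*E(\widetilde{\Delta})$ along $\widetilde{\Delta}$ produces precisely $\ell^\perp/\ell$ rather than some other subquotient of $\pi^*T^*\Delta$ of the same rank. This requires pinning down the symplectic pairing at the level of the torsion-free quotient $V$, not merely its rank, and confirming that the torsion killed in forming $V$ accounts exactly for the line $\ell$ (giving $\ell^\perp$) and that a further quotient by $\ell$ is forced by the twist. Once the restriction is identified as a vector bundle of rank $2n-2=\rank(\ell^\perp/\ell)$, local freeness of $V$ follows: $V$ is torsion-free on the smooth variety $Y$, its rank away from $\widetilde{\Delta}$ equals $2n-2$, and its fiber along $\widetilde{\Delta}$ has dimension exactly $2n-2$, so the fiber dimension is constant and $V$ is locally free.
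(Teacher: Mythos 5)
Your proposal has a genuine gap: the entire mathematical content of the proposition --- that the degeneration of $\beta^*E(\widetilde{\Delta})$ along the exceptional divisor yields precisely the symplectic reduction $\ell^\perp/\ell$ --- is never proved, only announced as a computation you ``would carry out fiberwise,'' and you yourself flag it as the main obstacle. That identification is exactly the delicate part: $E$ fails to be locally free along $\Delta$ (the extension group at a diagonal point is $\Ext^1_S(I_Z,I_Z)\cong T_{I_Z}S^{[n]}$, of dimension $2n$, not $2n-2$), and controlling what survives after blowing up, twisting by $\StructureSheaf{Y}(\widetilde{\Delta})$, and killing torsion requires representing $R\pi_{13,*}R\SheafHom(\pi_{12}^*\U,\pi_{23}^*\U)$ by a complex of locally free sheaves and performing a careful local analysis; this is the content of \cite[Prop. 4.1]{markman-hodge}, proven there for projective $S$. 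Your suggestion that Proposition \ref{prop-canonical-subsheaf-of-E} would organize the computation is also misleading: it is used in the paper only for the stability theorem, and the cokernel of $j$ in (\ref{eq-canonical-subsheaf}) is supported on the incidence locus $\I$, which contains $\Delta$, so that sequence gives no control over $E$ precisely where your computation must take place. Your final deduction of local freeness from constancy of fiber dimension is logically fine, but it is entirely contingent on the unproven identification.

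The paper's actual proof takes a different, global route that avoids redoing any local computation. For local freeness it cites \cite[Prop. 4.1 parts (3) and (6)]{markman-hodge} and observes that the argument there is local on $S^{[n]}\times S^{[n]}$, hence applies verbatim when $S$ is not projective. For the identification of $\restricted{V}{\widetilde{\Delta}}$, however, it does not argue near the diagonal at all: it works over the moduli space $\fM_{K3}$ of marked $K3$ surfaces, forms the universal sheaf $\V$ on the blow-up of the relative square of the universal Douady space, and uses the Base-Change and semicontinuity theorems to show that $\Hom(\restricted{V}{\widetilde{\Delta}},\ell^\perp/\ell)$ and $\Hom(\ell^\perp/\ell,\restricted{V}{\widetilde{\Delta}})$ are each one-dimensional, and spanned by mutually inverse isomorphisms, over a nonempty Zariski open subset $U\subset\fM_{K3}$; here the input is the known isomorphism in the projective case together with the density of projective marked pairs, and Lemma \ref{lemma-simplicity}. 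Since $U$ is open and $O^+(\Lambda)$-invariant, Lemma \ref{lemma-W-contains-marked-pairs-of-non-maximal-Picard-rank} (resting on Verbitsky's ergodicity theorem) forces $U$ to contain every marked pair of Picard rank $\leq 19$, in particular every non-projective $S$. To salvage your approach you would have to actually produce the local computation near the diagonal, in effect reproving \cite[Prop. 4.1]{markman-hodge} in the analytic category; as written, the proposal assumes the conclusion at its crucial step.
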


\begin{proof}
When the $K3$ surface $S$ is projective, the statement is precisely 
\cite[Prop. 4.1 parts (3) and (6)]{markman-hodge}.
The proof provided there uses a global complex over $S^{[n]}\times S^{[n]}$ of locally free sheaves,
representing the object $R\pi_{13}\left(R\SheafHom(\pi_{12}^*\U,\pi_{23}^*\U)\right)$.
However, the argument provided there is local and goes through when such a complex of  locally free sheaves
is given only in a complex analytic neighborhood of a point of the diagonal in $S^{[n]}\times S^{[n]}$.
Hence, the statement that $V$ is locally free holds without the assumption that $S$ is projective. 
It remains to be proved that the restriction of $V$ to $\widetilde{\Delta}$ is isomorphic to $\ell^\perp/\ell$
even when $S$ is non-projective.

Let $\beta:\Y\rightarrow \S^{[n]}\times_{\fM_{K3}}\S^{[n]}$ be the blow-up of the diagonal and 
$\D\subset \Y$ the exceptional divisor. 
The sheaf $\V:=(\beta^*\E)(\D)/\mbox{torsion}$ is locally free, by the above argument.
Let $\phi:\D\rightarrow \fM_{K3}$ and $\psi:\S^{[n]}\rightarrow \fM_{K3}$
be the natural morphisms. 
$\D$ is naturally isomoprhic to $\PP(T_\psi)$. 
Let $\LB$ be the tautological subbundle of $\phi^*T_\psi$ over $\D$. 
The sheaf $\psi_*\Omega^2_\psi$ is a line-bundle over $\fM_{K3}$. We have a natural injective homomorphism
$
\psi^*\psi_*\Omega^2_\psi\rightarrow \Omega^2_\psi.
$
We get a well defined symplectic-orthogonal subbundle $\LB^\perp\subset \phi^*T\psi$
as well as the quotient $\LB^\perp/\LB$.

The fiber of the sheaf 
$R^{4n-1}\phi_*\left((\LB^\perp/\LB)^*\otimes \restricted{\V}{\D}\otimes \omega_\phi\right)$
at every marked pair $(S,\eta)$ maps isomorphically onto the vector space
$\Ext^{4n-1}(\ell^\perp/\ell,\restricted{V}{\widetilde{\Delta}}\otimes\omega_{\widetilde{\Delta}})$,
by the Base-Change Theorem. The latter vector space 
is isomorphic to $\Hom(\restricted{V}{\widetilde{\Delta}},\ell^\perp/\ell)^*$ and 
is thus one-dimensional whenever $S$ is projective, by \cite[Prop. 4.1 parts (3) and (6)]{markman-hodge} and 
Lemma \ref{lemma-simplicity}. The locus of projective $K3$ surfaces is dense in $\fM_{K3}$. 
Hence, $\Hom(\restricted{V}{\widetilde{\Delta}},\ell^\perp/\ell)$
is one-dimensional over a non-empty Zariski open subset of $\fM_{K3}$, by the semi-continuity theorem.

Considering the sheaf 
$R^{4n-1}\phi_*\left(\restricted{\V^*}{\D}\otimes \LB^\perp/\LB \otimes  \omega_\phi\right)$
we conclude similarly that $\Hom(\ell^\perp/\ell,\restricted{V}{\widetilde{\Delta}})$
is one-dimensional over a non-empty Zariski open subset of $\fM_{K3}$.
Hence, the sheaves
\begin{eqnarray*}
L_1 & := & \phi_*\SheafHom(\LB^\perp/\LB,\restricted{\V}{\D}) \ \mbox{and}
\\
L_2 & := & \phi_*\SheafHom(\restricted{\V}{\D},\LB^\perp/\LB)
\end{eqnarray*}
are both locally free of rank $1$ over a non-empty Zariski open subset $U'$ of $\fM_{K3}$.
The fiber of $L_i$, $i=1,2$, is spanned by an isomorphism over every projective marked $K3$ surface.
Hence, the composition maps
$L_1\otimes L_2\rightarrow \phi_*\SheafEnd(\LB^\perp/\LB)$ and
$L_1\otimes L_2\rightarrow \phi_*\SheafEnd(\restricted{\V}{\D})$
are isomorphisms over a non-empty Zariski open subset $U$ of $U'$.
We conclude that $\restricted{V}{\widetilde{\Delta}}$ is isomorphic to $\ell^\perp/\ell$ for every marked pair $(S,\eta)$ in $U$.
The set $U$ is $O^+(\Lambda)$-invariant and thus contains every marked pair $(S,\eta)$ with Picard rank
$\leq 19$, by Lemma \ref{lemma-W-contains-marked-pairs-of-non-maximal-Picard-rank}.
Such is the Picard rank of every non-projective $K3$ surface.
\end{proof}

\begin{prop}
\label{prop-ell-perp-mod-ell-is-stable}
When $\Pic(S)$ is trivial 
the vector bundle $\ell^\perp/\ell$ has a unique non trivial subsheaf $C$ of rank less than $2n-2$. The rank of $C$ is $n-1$.
\end{prop}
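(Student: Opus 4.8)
The plan is to first exhibit the subsheaf $C$ explicitly, using the canonical subsheaf of Proposition \ref{prop-canonical-subsheaf-of-E}, and then prove its uniqueness by combining the symplectic self-duality of $\ell^\perp/\ell$ with a stability computation on the fibers of $\pi$; the triviality of $\Pic(S)$ enters through the resulting Picard-rank-$2$ rigidity of $\widetilde{\Delta}$. Concretely, the inclusion $p_2^*A_0\RightArrowOf{h}E$ of Proposition \ref{prop-canonical-subsheaf-of-E} pulls back under $\beta$ and, after twisting by $\StructureSheaf{Y}(\widetilde{\Delta})$ and passing to $V=\beta^*E(\widetilde{\Delta})/\mathrm{torsion}$, restricts on $\widetilde{\Delta}$ to a subsheaf of $\ell^\perp/\ell$. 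Since $p_2\circ\beta$ restricts on $\widetilde{\Delta}$ to $\pi$ and $\StructureSheaf{\widetilde{\Delta}}(\widetilde{\Delta})\cong\ell$, this subsheaf is $C:=\pi^*A_0\otimes\ell$, of rank $n-1$, with fibrewise model $\mathcal{O}(-1)^{\oplus(n-1)}$ on each $\PP(W)=\PP^{2n-1}$. Because $A_0$ is the quotient in the split sequence (\ref{eq-u}), at a reduced $Z=\{p_1,\dots,p_n\}$ the fibre of $C$ is $L/\ell$ for the Lagrangian $L=\bigoplus_i\langle\ell_i\rangle\subset\bigoplus_i T^*_{p_i}S$ cut out by the support points; hence $C$ is an isotropic, indeed Lagrangian, subbundle and $(\ell^\perp/\ell)/C\cong C^*$, with fibrewise model $\mathcal{O}(1)^{\oplus(n-1)}$.

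Next I would record the decisive fibrewise fact. Over the reduced locus the fibre of $\pi$ is $\PP(W)=Sp(W)/P$ with $W$ the cotangent space, and $\ell^\perp/\ell$ is the homogeneous bundle attached to the $P$-representation $\ell^\perp/\ell$: its Levi factor acts through the standard representation of $Sp(2n-2)$ and its unipotent radical acts trivially, so that representation is irreducible. By the standard stability of irreducible homogeneous bundles on $Sp(W)/P$, the restriction of $\ell^\perp/\ell$ to each such fibre is slope-stable of slope $0$, in agreement with the simplicity recorded in Lemma \ref{lemma-simplicity}. Consequently every proper saturated subsheaf $C'\subset\ell^\perp/\ell$ has strictly negative fibre-degree. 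Since $\Pic(S)$ is trivial we have $\Pic(S^{[n]})=\Integers\delta$, hence $\Pic(\widetilde{\Delta})=\Integers\pi^*\delta\oplus\Integers\xi$ with $\xi=c_1(\ell^*)$; thus $c_1(C')=a\,\pi^*\delta+b\,\xi$, where $b$ is exactly the fibre-degree, so $b<0$, and likewise $c_1(\ell^\perp/\ell)=0$.

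Finally I would prove uniqueness. Given saturated $C'$ with $0<\rank(C')<2n-2$, I would compare it with $C$ through the projection $\psi\colon C'\rightarrow(\ell^\perp/\ell)/C\cong C^*$, which under the symplectic form is the pairing of $C'$ with the Lagrangian $C=C^\perp$. The vanishing $\Hom(C^*,C)=0$, which holds because $\pi_*\SheafHom(C^*,C)=0$ (fibrewise $\Hom(\mathcal{O}(1)^{\oplus(n-1)},\mathcal{O}(-1)^{\oplus(n-1)})=0$), together with fibre-stability, is what I would use to force $\psi=0$, i.e.\ $C'\subseteq C$. Taking $C'$ of maximal rank among proper saturated subsheaves and noting that $C$ itself qualifies then yields $\rank(C')=n-1$, whence $C'=C$ by saturation; the symplectic bookkeeping $c_1(C'^{\perp})=c_1(\ell^\perp/\ell)+c_1(C')=c_1(C')$ reconfirms that the unique such subsheaf is Lagrangian.

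The main obstacle I anticipate is precisely the uniqueness step, namely ruling out subsheaves $C'$ with $\psi\neq0$ — those not contained in the canonical Lagrangian $C$. Geometrically such a $C'$ would record a symplectic plane-decomposition of $W$ different from the one cut out by the support of $Z$, and the essential content of the proposition is that no competing decomposition varies holomorphically over $S^{[n]}$. Turning the local, fibrewise slope bounds into this global statement is where I expect the real work to lie, and it is exactly here that both the fibrewise stability of $\ell^\perp/\ell$ and the Picard-rank-$2$ rigidity forced by the triviality of $\Pic(S)$ are indispensable.
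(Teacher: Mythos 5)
Your construction of $C$ is fine and agrees with the paper's: the fibrewise description you give, $L/\ell$ with $L=\oplus_i\langle\ell_i\rangle$ the Lagrangian cut out by the support points, is exactly the paper's canonical subsheaf, and fibrewise stability of $\ell^\perp/\ell$ on the $\PP^{2n-1}$ fibers of $\pi$ (the paper quotes \cite[Lemma 7.4]{torelli} for this) is indeed one of the two decisive inputs. The genuine gap is exactly the step you flag yourself: forcing $\psi=0$ for the projection $\psi\colon C'\rightarrow(\ell^\perp/\ell)/C$. The vanishing $\Hom(C^*,C)=0$ is a non sequitur here: $\psi$ is a map out of the \emph{unknown} sheaf $C'$ into $C^*$, and knowing there are no maps $C^*\rightarrow C$ controls neither $\psi$ nor its image. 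What must be excluded is a nonzero $\psi$ whose image is a proper lower-rank subsheaf of $C^*\cong\pi^*A_0^*\otimes\ell^*$, and neither your fibre-degree bound $b<0$ nor the Picard-rank-two bookkeeping rules this out: a rank-one subsheaf of $C^*$ with negative fibre-degree is consistent with both, and deciding whether one exists amounts to a cohomological computation on $S^{[n]}$ (sections of $A_0^*$ twisted by symmetric powers of the tangent bundle and powers of $\StructureSheaf{S^{[n]}}(\delta)$) that you never perform. There is a second gap of the same nature: even granting $C'\subseteq C$, uniqueness requires that $C$ itself admit no nontrivial saturated subsheaf of lower rank --- such a subsheaf would equally violate the proposition --- and your maximal-rank argument does not address this.

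The paper closes both gaps with one idea absent from your outline: Galois descent under the symmetric group. Away from the diagonal, $S^{[n]}$ is covered by $S^n$, so a saturated subsheaf $F\subset\ell^\perp/\ell$ pulls back and extends to an $\fS_n$-equivariant saturated subsheaf $\widetilde{F}$ of $\tilde{\ell}^\perp/\tilde{\ell}$ over $\PP(T^*S^n)$. There the canonical subsheaf becomes completely explicit: $\widetilde{C}\cong\tilde{\ell}\otimes_\ComplexNumbers W$ with $W$ the reflection representation of $\fS_n$. Since $\PP(T^*S^n)$ carries no effective divisors (Lemma \ref{lemma-vanishing-of-global-sections-of-symmetric-powers}; this is where $\Pic(S)=0$ enters, making $\Pic(\PP(T^*S^n))$ cyclic), every saturated subsheaf of $\tilde{\ell}\otimes W$ has the form $\tilde{\ell}\otimes W'$ (Lemma \ref{lemma-saturated-subsheaves-of-the-trivial-sheaf}), and irreducibility of $W$ then kills every proper \emph{equivariant} subsheaf of $\widetilde{C}$, and dually of $\widetilde{Q}=[\tilde{\ell}^\perp/\tilde{\ell}]/\widetilde{C}$ (Lemma \ref{lemma-on-equivariant-subsheaves}). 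Consequently, if $\widetilde{F}\not\subseteq\widetilde{C}$, the equivariant map $\widetilde{F}\rightarrow\widetilde{Q}$ has full-rank image; absence of effective divisors puts its cokernel in codimension $\geq 2$, Hartshorne's criterion (Lemma \ref{lemma-reflexive-criterion}) then forces the cokernel to vanish, the sequence splits, and only at this point does fibrewise stability enter, to contradict the splitting. The same equivariant rigidity disposes of proper subsheaves of $\widetilde{C}$, hence of $C$. So the engine of uniqueness is $\fS_n$-equivariance, not fibrewise stability plus Picard rank; without passing to the cover your outline cannot be completed as written.
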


Proposition \ref{prop-ell-perp-mod-ell-is-stable}
will be proven in Section \ref{sec-stability}.

\begin{proof}[Proof of Theorem \ref{cor-stability-of-E-hilbert-scheme-case}]
\ref{cor-item-stability-for-every-omega})
We prove first that every non-trivial proper subsheaf of $V$ has rank $n-1$.
Let $F$ be a non-trivial subsheaf of $V$ of lower rank. We may assume that $F$ is saturated in $V$. 
Then $F$ is a subbundle of $V$ away from the locus where $V/F$ is not locally free. 
That locus has codimension at least $2$, since 
$V/F$ is torsion free.
%reflexive, and so locally free away from a locus of codimension $\geq 3$. 
Thus, $F$ restricts to a subsheaf 
of the restriction of $V$ to $\widetilde{\Delta}$ of the same rank as $F$. 
The restriction of $V$  to $\widetilde{\Delta}$ is isomorphic to 
$\ell^\perp/\ell$, by Proposition \ref{prop-V}.
We conclude that $\rank(F)=n-1$, by Proposition \ref{prop-ell-perp-mod-ell-is-stable}. 

Let $F$ be a non-trivial proper saturated subsheaf of $E$. Then $F$ is reflexive, being saturated in the reflexive sheaf $E$, and
$\rank(F)=n-1$.
%The same holds for any non-trivial proper saturated subsheaf $F$ of $E$.
In particular, the image $F_0$ of the homomorphism $h$ in the sequence  
(\ref{eq-canonical-subsheaf}) does not have any non-trivial subsheaf of lower rank. Furthermore, either $F=F_0$, or 
$F\cap F_0=0$.

Assume that $F\cap F_0=0$. Composing the inclusion $F\rightarrow E$ with the homomorpism $j$ in the sequence 
(\ref{eq-canonical-subsheaf}) we get the injective homomorphism $g:F\rightarrow p_1^*A_0^*$. 
The sheaf $F$ is reflexive, $p_1^*A_0^*$ is locally free, and the rank of both is $n-1$. 
Hence, every irreducible component of the support of the co-kernel of $g$ must be of codimension $1$,
by Lemma \ref{lemma-reflexive-criterion} (\ref{lemma-item-iff}).
%\cite[Cor. 1.5]{hartshorne-reflexive}.
The co-kernel of $g$ surjects onto the co-kernel of the homomorphism $j$, and the latter is supported, set theoretically,  on the
codimension two subvariety $\I$, by Proposition \ref{prop-canonical-subsheaf-of-E}. 
Hence, $\I$ is contained in some effective divisor of $S^{[n]}\times S^{[n]}$. But such a divisor does not exists, since the
only effective divisors on $S^{[n]}$ are multiples of the divisor of non-reduced subschemes. A contradiction. Hence, $F=F_0$.

We have the equalities $c_1(F_0)=-p_2^*c_1(A_0)$, $c_1(E/F_0)=p_1^*c_1(A_0^*)$, and 
$c_1(A_0)=-\delta$, where $2\delta$ is an effective divisor, as noted in Section \ref{sec-a-canonical-subsheaf}. 
Hence, the unique non-trivial subsheaf $F_0$ of $E$ does not slope-destabilize $E$ with respect to any K\"{a}hler class on $S^{[n]}\times S^{[n]}$.

\ref{cor-item-stability-for-some-omega})
Let $W\subset \fM_{K3}$ be the subset consisting of pairs $(S,\eta)$ for which the sheaf $\E$, 
given in (\ref{eq-equivariant-E}),
restricts to $S^{[n]}\times S^{[n]}$
as an $\omega\boxplus \omega$-slope-stable sheaf with respect to some K\"{a}hler class $\omega$ on $S^{[n]}$.
$W$ is an open subset, since every K\"{a}hler class extends to a section of K\"{a}hler classes for the universal Douady space over
an open subset of $\fM_{K3}$ (see \cite[Th. 9.3.3]{voisin-book-vol1}) and since
slope-stability is an open condition. 
$W$ is clearly $O^+(\Lambda)$-invariant.
Hence, $W$ contains every marked pair $(S,\eta)$ in $\fM_{K3}$, with $\rank(\Pic(S))\leq 19$, by
Lemma \ref{lemma-W-contains-marked-pairs-of-non-maximal-Picard-rank}.
\end{proof}

%*************
% Hide
%*************
\hide{
%*****************************************************************
%
%*****************************************************************
\section{Saturated restrictions to codimension two subvarieties}

Let $M$ be a smooth variety, $V$ a locally free sheaf  over $M$, $F$ a saturated 
subsheaf of $V$, and $Z$ a reduced subscheme  of pure codimension $2$ in $M$.
Denote the restriction of $V$ to $Z$ by $\restricted{V}{Z}$. 

\begin{defi}
The {\em saturated restriction} of $F$ to $Z$ is the saturation in $\restricted{V}{Z}$
of the image of the restriction of $F$ to $Z$. 
\end{defi}

Note that $F$ is necessarily reflexive, hence locally free in a neighborhood of 
a generic point of each irreducible component of $Z$. In general, 
the rank of the saturated restriction of $F$ need not be equal to that of $F$, as the following example illustrates. 

\begin{example}
Let $s$ be a section of a rank $2$ vector bundle $W$ over $M$ vanishing along a smooth subvariety $Z$ of codimension $2$ in $M$.
We get the short exact sequence
\[
0\rightarrow \wedge^2W^*\RightArrowOf{s} W^* \RightArrowOf{s} I_Z\rightarrow 0.
\]
The restriction of $W^*$ to $Z$ maps isomorphically onto the restriction of $I_Z$ to $Z$, and both are isomorphic to the co-normal bundle of $Z$. Hence, the saturated restriction to $Z$ of the image of  $\wedge^2W^*$ in $W^*$ vanishes.
\end{example}

Following is a criterion for the rank of the saturated restriction to remain the same. Assume that $Z$ is a smooth subvariety.
Denote by $\zeta:Z\rightarrow M$ the inclusion. Let $\beta:\hat{M}\rightarrow M$ be the blow-up of $M$ centered at $Z$, let
$d:D\rightarrow \hat{M}$ be the inclusion of the exceptional divisor, and let $p:D\rightarrow Z$ be the natural projection.
Denote by $G$ the saturation of the image of $\beta^*F$ in $\beta^*V$ and let $G_D$ be the image of $d^*G$  in $d^*\beta^*V$.

\begin{lem}
Assume that there exists a saturated subsheaf $G_Z$ of $\zeta^*V$, such that $G_D=p^*G_Z$. 
Then $G_Z$ is the saturated restriction of $F$ to $Z$ and $\rank(F)=\rank(G_Z)$.
\end{lem}

\begin{proof}
The equality $\rank(G)=\rank(G_D)$ holds, since $D$ is a divisor, and $\rank(G)=\rank(F)$. The equality $G_D=p^*G_Z$
thus implies that $\rank(F)=\rank(G_Z)$.

We prove next that $G_Z$ is the saturated restriction of $F$. We may assume that $G_Z$ is a subbundle of $\zeta^*V$,
possibly after replacing $M$ by the complement in $M$ of a closed subset of $Z$ of codimension $\geq 2$ in $Z$.
Then $G_D$ is a subbundle of $d^*\beta^*V$. We may further assume that $G$ is a subbundle of $\beta^*V$, 
possibly after replacing $M$ by the complement of a closed subset of codimension $\geq 2$ disjoint from $Z$.
Set $r:=\rank(F)$. 
Let $\pi:Gr(r,V)\rightarrow M$ be the Grassmannian bundle of $r$-dimensional subspaces of the fibers of $V$.
Let
\[
0\rightarrow \tau \rightarrow \pi^*V\rightarrow Q\rightarrow 0
\]
be the exact sequence of the tautological sub and quotient bundles. 
The subbundle $G$ of $\beta^*V$ determines a morphism $g:\hat{M}\rightarrow Gr(r,V)$, such that $\beta=\pi\circ g$ and
$g^*\tau=G$. 
\[
\xymatrix{
D \ar[r]^d\ar[d]_{p} & \hat{M}\ar[r]^{g}\ar[d]_\beta & Gr(r,V)\ar[dl]^{\pi}
\\
Z \ar[r]_\zeta & M
}
\]
We get the short exact sequence over $\hat{M}$
\[
0\rightarrow g^*\tau \rightarrow \beta^*V\rightarrow g^*Q\rightarrow 0.
\]
The sheaves $R^i\beta_*(g^*\tau)$ vanish, for $i>0$,  by the equality $d^*\tau=p^*G_Z$ and 
\cite[Cor. 12.9]{hartshorne}. Similarly, $R^i\beta_*\beta^*V$ vanishes, for $i>0$. We conclude that $R^i\beta_*g^*Q$ vanishes, for $i>0$,
and get the short exact sequence
\[
0\rightarrow \beta_*g^*\tau\rightarrow V\rightarrow \beta_*g^*Q\rightarrow 0.
\]
The equality $d^*\tau=p^*G_Z$ implies that the spaces of global sections of the restriction of $g^*Q$ to every fiber of $\beta$ are all
of the same dimension equal to the rank of $Q$. Hence, 
the sheaf $\beta_*g^*Q$ is locally free, by 
\cite[Cor. 12.9]{hartshorne}. Consequently, the sheaf $\beta_*g^*\tau$ is a subbundle of $V$.
Now $F$ is equal to $\beta_*g^*\tau$ away from $Z$ and both are saturated subsheaves. Hence $F$ is a subbundle of $V$ and its 
restriction to $Z$ is $G_Z$.
\end{proof}

%*************
% End Hide
%*************
}

%*****************************************************************
%
%*****************************************************************
\section{Proof of Proposition \ref{prop-ell-perp-mod-ell-is-stable}}
\label{sec-stability}

Let $S$ be a $K3$ surface with a trivial Picard group.

\begin{lem}
\label{lemma-vanishing-of-global-sections-of-symmetric-powers}
\begin{enumerate}
\item
\label{lemma-item-vanishing-of-global-sections-of-symmetric-powers}
$H^0(S^n,\Sym^k(T^*S^n))=0,$ for all $n>0$ and $k>0.$
\item
\label{lemma-item-projective-bundle-does-not-contain-any-effective-divisor}
$\PP(T^*S^n)$ does not contain any effective divisors.
\end{enumerate}
\end{lem}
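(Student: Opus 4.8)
The plan is to derive both parts from the single vanishing statement $H^0(S,\Sym^m T^*S)=0$ for $m\geq 1$, and to prove that statement using the holonomy of $S$. I first record the one fact I will use repeatedly: on the K3 surface $S$ the holomorphic symplectic form gives an isomorphism $TS\cong T^*S$, hence $TS^n\cong T^*S^n$ on the $n$-fold product, so symmetric powers of the tangent and cotangent bundles agree; this will make part (\ref{lemma-item-projective-bundle-does-not-contain-any-effective-divisor}) insensitive to the projective-bundle convention.

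To prove part (\ref{lemma-item-vanishing-of-global-sections-of-symmetric-powers}) I would reduce to the case $n=1$ by Künneth. Writing $p_i:S^n\to S$ for the projections, the cotangent bundle of the product splits as $T^*S^n\cong\bigoplus_{i=1}^n p_i^*T^*S$, and the symmetric power of a direct sum decomposes, giving
\[
\Sym^k(T^*S^n)\cong\bigoplus_{k_1+\cdots+k_n=k}\Sym^{k_1}T^*S\boxtimes\cdots\boxtimes\Sym^{k_n}T^*S,
\]
the sum being over tuples of non-negative integers. By the Künneth formula the global sections of each summand form the tensor product $\bigotimes_{i=1}^n H^0(S,\Sym^{k_i}T^*S)$. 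When $k>0$ some index $k_{i}$ is positive, so it suffices to prove $H^0(S,\Sym^m T^*S)=0$ for every $m\geq 1$; then every summand, and hence the whole space, has no global sections.

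This base case is the heart of the matter and the only genuine obstacle. Here I would use that $S$ carries a Ricci-flat K\"ahler metric (Yau), simply connected with restricted holonomy exactly $SU(2)$. By the Bochner principle for compact K\"ahler manifolds of vanishing first Chern class, every holomorphic section of a tensor bundle, in particular of $\Sym^m T^*S$, is parallel for the Levi-Civita connection (cf.\ Beauville); a parallel section is determined by its value at one point, which must be invariant under the holonomy representation. Thus $H^0(S,\Sym^m T^*S)$ is the space of $SU(2)$-invariants in $\Sym^m$ of the standard two-dimensional representation, which is irreducible of dimension $m+1$ and so has no non-zero invariants once $m\geq 1$. (Alternatively one argues that $T^*S$ is slope-stable of degree zero for every K\"ahler class, that the irreducibility of $\Sym^m$ of the standard representation forces $\Sym^m T^*S$ to be stable of slope zero, and that a stable bundle of slope zero and rank $\geq 2$ has no non-zero section, since the image of such a section would be a rank-one subsheaf of slope $\geq 0$, contradicting stability.) Note that this step needs only that $S$ is K3; the hypothesis $\Pic(S)=0$ enters only in part (\ref{lemma-item-projective-bundle-does-not-contain-any-effective-divisor}).

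Finally I would deduce part (\ref{lemma-item-projective-bundle-does-not-contain-any-effective-divisor}). Since $H^1(S,\Integers)=0$, $H^1(\StructureSheaf{S})=0$, and $\Pic(S)=0$, the Künneth decomposition of $H^2(S^n,\Integers)$ has no cross terms and forces $\Pic(S^n)=0$; the projective bundle formula then gives $\Pic(\PP(T^*S^n))=\Integers\cdot[\StructureSheaf{}(1)]$, where $\rho:\PP(T^*S^n)\to S^n$ is the projection and $\StructureSheaf{}(1)$ the relative hyperplane class. Any effective divisor $D$ therefore lies in a class $m[\StructureSheaf{}(1)]$ for some $m\in\Integers$ and yields a non-zero section of $\StructureSheaf{}(m)$. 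For $m<0$ there are none; for $m=0$ the sections are the non-zero constants, whose divisor is empty; and for $m>0$ one has $H^0(\PP(T^*S^n),\StructureSheaf{}(m))\cong H^0(S^n,\Sym^m T^*S^n)=0$ by part (\ref{lemma-item-vanishing-of-global-sections-of-symmetric-powers}), using $\rho_*\StructureSheaf{}(m)\cong\Sym^m T^*S^n$ together with the self-duality $TS^n\cong T^*S^n$ noted above. Hence no non-zero effective divisor exists, which is the claim.
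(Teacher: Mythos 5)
Your proposal is correct and follows essentially the same route as the paper: reduce to $n=1$ via the decomposition of $\Sym^k$ of a direct sum plus K\"unneth, use the holonomy of $S$ (the paper's $Sp(2)\cong SL(2)$, your $SU(2)$) and the irreducibility of symmetric powers of the standard representation to kill sections over $S$, and deduce part (2) from the fact that every line bundle on $\PP(T^*S^n)$ is a power of the tautological class. The only differences are cosmetic: your primary base-case mechanism is the Bochner principle (holomorphic tensors are parallel), whereas the paper argues via stability of $\Sym^d(T^*S)$ together with $c_1=0$ --- which is precisely your parenthetical alternative --- and your explicit use of $TS^n\cong T^*S^n$ to neutralize the projective-bundle convention makes precise a point the paper leaves implicit.
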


\begin{proof}
\ref{lemma-item-vanishing-of-global-sections-of-symmetric-powers}) 
The vector bundles $\Sym^d(T^*S)$ are stable, as the holonomy of $T^*S$ is $Sp(2)\cong SL(2)$
and the $d$-th symmetric power of the standard rank $2$
representation of $SL(2)$ is irreducible, for all $d\geq 0$. 
The spaces $H^0(S,\Sym^d(T^*S))$ thus vanish for $d>0$, since $c_1(\Sym^d(T^*S))=0$.
Now $H^0(S^n,\Sym^k(T^*S^n))$ is the direct sum, over all ordered
partitions $k=d_1+ \dots +d_n$, of the tensor product 
$\otimes_{i=1}^nH^0(S,\Sym^{d_i}(T^*S))$. The latter tensor product vanishes if $k>0$,
since at least one $d_i$ is positive.

\ref{lemma-item-projective-bundle-does-not-contain-any-effective-divisor})
Any line bundle is a tensor power of the tautological line bundle
$\StructureSheaf{\PP(T^*S^n)}(1)$. The statement follows immediately from part
\ref{lemma-item-vanishing-of-global-sections-of-symmetric-powers}.
\end{proof}

\begin{lem}
\label{lemma-saturated-subsheaves-of-the-trivial-sheaf}
Let $X$ be a compact complex manifold, which does not have any effective divisors, and $L$ a line bundle on $X$. Let $V$ be a finite dimensional vector space. Then every saturated subsheaf of $V\otimes_\ComplexNumbers L$ is of the form 
$W\otimes_\ComplexNumbers L$, for some subspace $W$ of $V$.
\end{lem}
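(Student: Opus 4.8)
The plan is to reduce to a Plücker/determinant computation and to use the hypothesis on divisors to force the Gauss map into the Grassmannian to be constant. First I would twist by $L^{-1}$: since $F\mapsto F\otimes L^{-1}$ is an exact auto-equivalence preserving torsion-freeness, reflexivity and saturation, a saturated subsheaf $F\subseteq V\otimes_\ComplexNumbers L$ has the desired form $W\otimes_\ComplexNumbers L$ if and only if $F\otimes L^{-1}\subseteq V\otimes_\ComplexNumbers\StructureSheaf{X}$ has the form $W\otimes_\ComplexNumbers\StructureSheaf{X}$. So I may assume $L=\StructureSheaf{X}$. Let $r=\rank(F)$ (the cases $r=0$ and $r=\dim V$ being trivial), and let $Q:=(V\otimes\StructureSheaf{X})/F$, which is torsion free because $F$ is saturated. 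Let $U\subseteq X$ be the open locus over which $Q$ is locally free; then $X\setminus U$ has codimension $\geq 2$, and over $U$ the sheaf $F$ is a subbundle of $V\otimes\StructureSheaf{U}$. Note also that $F$ is reflexive, being saturated in the locally free sheaf $V\otimes\StructureSheaf{X}$, by Lemma \ref{lemma-reflexive-criterion}(\ref{lemma-item-iff}).

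Next I would form the determinant line bundle $M:=(\wedge^{r}F)^{**}$ and double-dualize the natural map $\wedge^{r}F\to\wedge^{r}(V\otimes\StructureSheaf{X})=(\wedge^{r}V)\otimes\StructureSheaf{X}$ to obtain a global homomorphism $\Phi\colon M\to(\wedge^{r}V)\otimes\StructureSheaf{X}$, i.e. a global section $\sigma\in H^0(X,(\wedge^{r}V)\otimes M^{-1})=(\wedge^{r}V)\otimes_\ComplexNumbers H^0(X,M^{-1})$. Over $U$ the map $\Phi$ is simply $\wedge^{r}$ of the subbundle inclusion, hence nowhere vanishing, so $\sigma\neq 0$. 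Writing $\sigma=\sum_I e_I\otimes s_I$ in a basis $\{e_I\}$ of $\wedge^{r}V$, at least one $s_I\in H^0(X,M^{-1})$ is nonzero; since $X$ carries no effective divisor, every nonzero section of a line bundle is nowhere vanishing, so such an $s_I$ trivializes $M^{-1}\cong\StructureSheaf{X}$. Under this trivialization $H^0(X,M^{-1})=H^0(X,\StructureSheaf{X})=\ComplexNumbers$ (here I use that $X$ is connected), so each $s_I$ is a constant and $\sigma$ becomes a fixed nonzero vector $\omega\in\wedge^{r}V$, independent of the point of $X$.

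Finally I would identify $\omega$ and read off $W$. Over each $x\in U$ the value of $\sigma$ spans the Plücker line $\wedge^{r}F_x\subseteq\wedge^{r}V$, so the constant $\omega$ lies in $\wedge^{r}F_x$ for every $x\in U$; in particular $\omega$ is decomposable, say $\omega=w_1\wedge\cdots\wedge w_r$, and $\wedge^{r}F_x=\ComplexNumbers\,\omega=\wedge^{r}W$ for $W:=\langle w_1,\dots,w_r\rangle$. Since an $r$-dimensional subspace is recovered from its Plücker line (for instance, $v\in F_x$ iff $v\wedge\omega=0$), this forces $F_x=W$ for all $x\in U$, i.e. $F|_{U}=W\otimes\StructureSheaf{U}$. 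As $F$ is reflexive and $X\setminus U$ has codimension $\geq 2$, the sheaf $F$ equals the pushforward of its restriction to $U$, whence $F=W\otimes\StructureSheaf{X}$. The step requiring the most care is the globalization: producing $\sigma$ on all of $X$ through the reflexive hull $M$ and double-dualization, so that no separate Hartogs extension across $X\setminus U$ is needed, and then invoking the no-effective-divisor hypothesis to trivialize $M^{-1}$ and force $\sigma$ — equivalently the Gauss map $U\to\mathrm{Gr}(r,V)$ — to be constant; the pointwise decomposability of the constant value $\omega$ is exactly what converts this constancy into the subspace $W$.
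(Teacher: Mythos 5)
Your proof is correct, but it takes a genuinely different route from the paper's. The paper argues globally and more briefly: it chooses a generic linear quotient $Q$ of $V$ with $\dim Q=\rank(F)$ so that the composition $F\rightarrow V\otimes_\ComplexNumbers L\rightarrow Q\otimes_\ComplexNumbers L$ is injective, notes that its cokernel is either zero or supported in codimension $1$ by Lemma \ref{lemma-reflexive-criterion} (\ref{lemma-item-iff}) (applied to the reflexive $F$ inside the locally free $Q\otimes_\ComplexNumbers L$), rules out the codimension-$1$ case by the no-divisor hypothesis, and concludes that $F\cong Q\otimes_\ComplexNumbers L$; the inclusion $Q\otimes_\ComplexNumbers L\rightarrow V\otimes_\ComplexNumbers L$ is then induced by a constant linear map $Q\rightarrow V$, because $\Hom(L,V\otimes_\ComplexNumbers L)\cong V$ on a compact connected $X$. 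This exhibits $F$ as $W\otimes_\ComplexNumbers L$ directly, with no determinants and no extension across a bad locus. You instead pass to the determinant: the section $\sigma$ of $(\wedge^{r}V)\otimes M^{-1}$, the nowhere-vanishing of nonzero sections of line bundles on a divisor-free manifold, and the resulting constancy of the Gauss map, followed by a codimension-$2$ extension using reflexivity. This is sound and more geometric (it makes visible that the hypothesis forces the Gauss map $U\rightarrow\mathrm{Gr}(r,V)$ to be constant), but it consumes a few extra standard facts that you should justify or cite: that $(\wedge^{r}F)^{**}$ is invertible (rank-one reflexive sheaves on a complex manifold are line bundles, by local factoriality of the local rings), that the zero locus of a nonzero section of a line bundle on a connected compact manifold is an effective divisor if non-empty, and that a reflexive sheaf equals the pushforward of its restriction from the complement of an analytic subset of codimension $\geq 2$ (normality of reflexive sheaves). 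Both arguments use compactness and connectedness of $X$ in the same way, namely through $H^0(X,\StructureSheaf{X})=\ComplexNumbers$.
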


\begin{proof}
Let $F$ be a saturated subsheaf of $V\otimes_\ComplexNumbers L$ and $\iota:F\rightarrow V\otimes_\ComplexNumbers L$
the inclusion homomorphism. Choose a generic quotient space $Q$ of $V$, such that the composite homomorphism $h$, given by
$F\rightarrow V\otimes_\ComplexNumbers L\rightarrow Q\otimes_\ComplexNumbers L$, is injective. The sheaf $F$ is reflexive, being a saturated subsheaf of a locally free sheaf. Thus, 
%$F$ is locally free away from a subscheme of codimension at least $3$, and so 
the cokernel of $h$ either vanishes, or it is supported on a codimension $1$ subscheme, by Lemma \ref{lemma-reflexive-criterion} (\ref{lemma-item-iff}).
%  \cite[Cor. 1.5]{hartshorne-reflexive}. 
The latter case is excluded by the assumption that $X$ does not have any effective divisors. Hence $h$ is an isomorphism. Let
$\iota_0:Q\rightarrow V$ be the composition 
$
Q\cong \Hom(L,F) \RightArrowOf{\iota_*}\Hom(L,V\otimes_\ComplexNumbers L)\cong V.
$
Then $F$ is the image of $\iota_0\otimes 1:Q\otimes_\ComplexNumbers L\rightarrow V\otimes_\ComplexNumbers L$.
\end{proof}

\begin{lem}
\label{lemma-on-equivariant-subsheaves}
Let $X$, $L$, and $V$ be as in Lemma \ref{lemma-saturated-subsheaves-of-the-trivial-sheaf}.
Let $G$ be a finite group of automorphisms of $X$. Assume that $L$ is endowed with a $G$-equivariant structure,
$V$ is an irreducible $G$-representation, and endow $V\otimes_\ComplexNumbers L$ with the associated $G$-equivariant structure. 
Then $V\otimes_\ComplexNumbers L$ does not have any non-trivial saturated $G$-equivariant subsheaf of lower rank.
\end{lem}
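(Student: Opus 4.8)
The plan is to reduce the $G$-equivariant statement to the non-equivariant structure theorem already established in Lemma \ref{lemma-saturated-subsheaves-of-the-trivial-sheaf}. First I would take any saturated $G$-equivariant subsheaf $F$ of $V\otimes_\ComplexNumbers L$. Forgetting the equivariant structure, Lemma \ref{lemma-saturated-subsheaves-of-the-trivial-sheaf} applies, so $F$ must be of the form $W\otimes_\ComplexNumbers L$ for a uniquely determined subspace $W\subseteq V$. The point is that this $W$ is canonically recovered from $F$ (for instance as the image of the composition $\Hom(L,F)\hookrightarrow \Hom(L,V\otimes_\ComplexNumbers L)\cong V$ constructed in the previous proof), so it depends functorially on $F$.

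The key step is then to argue that $W$ is a $G$-subrepresentation of $V$. Since $F$ is $G$-equivariant, each $g\in G$ maps $F$ isomorphically to $F$ compatibly with the given equivariant structure on $V\otimes_\ComplexNumbers L$. Because the assignment $F\mapsto W$ is canonical, the action of $g$ on $F$ induces the linear action of $g$ on $V$ restricted to $W$, and carries $W$ to $W$. Concretely, the equivariant structure on $V\otimes_\ComplexNumbers L$ is the tensor product of the $G$-action on $V$ with the chosen equivariant structure on $L$, and under the identification $W\cong \Hom(L,F)$ the $G$-action matches the given $G$-representation structure on $V$; hence $g(W)=W$ for all $g\in G$, so $W$ is a $G$-subrepresentation.

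Finally I would invoke irreducibility of $V$: the only $G$-subrepresentations are $0$ and $V$ itself. Correspondingly the only saturated $G$-equivariant subsheaves are $0$ and all of $V\otimes_\ComplexNumbers L$, i.e. there is no non-trivial saturated $G$-equivariant subsheaf of lower rank, which is exactly the assertion. Note that the hypothesis that $X$ has no effective divisors is used only through Lemma \ref{lemma-saturated-subsheaves-of-the-trivial-sheaf} to guarantee the product form $W\otimes_\ComplexNumbers L$; once that is in hand the rest is representation-theoretic bookkeeping.

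The main obstacle I anticipate is verifying carefully that the subspace $W$ produced by Lemma \ref{lemma-saturated-subsheaves-of-the-trivial-sheaf} is genuinely $G$-stable, rather than merely invariant up to the line-bundle twist. This requires tracking how the $G$-equivariant structure on $V\otimes_\ComplexNumbers L$ decomposes as the tensor of the $V$-action and the $L$-action, and confirming that the canonical recovery $F\mapsto W$ intertwines the $G$-action on $F$ with the representation on $V$ while the $L$-twist is absorbed into the equivariant structure on $L$ and cancels. Once this compatibility is spelled out, the conclusion is immediate from Schur-type irreducibility.
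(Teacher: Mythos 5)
Your proposal is correct and is essentially the paper's own proof: apply Lemma \ref{lemma-saturated-subsheaves-of-the-trivial-sheaf} to write $F=W\otimes_\ComplexNumbers L$, observe that $G$-equivariance of $F$ makes $W\cong\Hom(L,F)\subset\Hom(L,V\otimes_\ComplexNumbers L)\cong V$ a $G$-subrepresentation, and conclude $W=0$ or $W=V$ by irreducibility. The only difference is expository: the paper asserts the key step in one line, while you spell out why the $L$-twist cancels under the identification $\Hom(L,V\otimes_\ComplexNumbers L)\cong V$.
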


\begin{proof}
Let $F\subset V\otimes_\ComplexNumbers L$ be a saturated subsheaf. 
Then $F=W\otimes_\ComplexNumbers L$, for some subspace $W$ of $V$, by 
Lemma \ref{lemma-saturated-subsheaves-of-the-trivial-sheaf}. $G$-equivariance of $F$ implies that
$Hom(L,F)$ is a $G$-subrepresentation of the irreducible representation $V$. Hence, $W=0$ or $W=V$.
\end{proof}

%*************
% Hide
%*************
\hide{
\begin{lem}
Let $S$ be a $K3$ surface with trivial Picard group. Then $\PP[TS\oplus \StructureSheaf{S}]$ has a unique effective reduced divisor 
$\PP(TS)$.
\end{lem}

\begin{proof}
Let $D$ be an irreducible and reduced divisor in $X:=\PP[TS\oplus \StructureSheaf{S}]$. Then $D$ maps onto $S$, since $S$ does not contain any effective divisor. Thus, $D$ intersects the generic $\PP^2$ fiber of $\pi:\PP[TS\oplus \StructureSheaf{S}]\rightarrow S$
is an effective divisor of positive degree $d$. Let $\StructureSheaf{X}(-1)$ be the tautological line subbundle of 
$\pi^*(TS\oplus \StructureSheaf{S})$. Then $\StructureSheaf{X}(D)$ is isomorphic to $\StructureSheaf{X}(d)\otimes\pi^*L$, for some line bundle $L$ over $S$, which must be trivial, by the triviality of $\Pic(S)$. Hence,
\[
H^0(X,\StructureSheaf{X}(d))\cong H^0(S,\Sym^d[T^*S\oplus \StructureSheaf{S}])\cong 
H^0(S,\oplus_{k=0}^d\Sym^k(T^*S))\stackrel{{\rm Lemma} \ \ref{lemma-vanishing-of-global-sections-of-symmetric-powers}}{\cong}
H^0(S,\StructureSheaf{S})\cong \ComplexNumbers.
\]
We conclude that $\linsys{\StructureSheaf{X}(D)}=\linsys{\StructureSheaf{X}(d)}$ and the latter consists of the single effective divisor 
$\PP(TS)$. 
\end{proof}

\begin{lem}
Let $f:\X\rightarrow B$ be a smooth and proper morphism between smooth complex manifolds, and $D\subset \X$ an irreducible and reduced divisor, flat over $B$. Assume that for each point $b\in B$ the fiber $D_b$ of $D$ over $b$ is the unique reduced effective divisor in the fiber $X_b$ of $f$ over $b$ and $\StructureSheaf{X_b}(D_b)$ generates $\Pic(X_b)$. Let $V$ be a vector bundle of rank $n$ over $B$ and $F$ a saturated subsheaf of $f^*V$ of
rank $r$, such that $0<r<n$. Then $F$ is of the form $f^*W$, for a saturated subsheaf $W$ of $V$.
\end{lem}

\begin{proof}
Let $\iota:F\rightarrow f^*V$ be the inclusion homomorphism. 
Consider first the case $r=1$.
Then $F$ is isomorphic to $f^*L\otimes\StructureSheaf{\X}(-kD)$, for some integer $k$, and $\iota$ is a section of 
\[
H^0(\X,f^*[V\otimes L^{-1}]\otimes \StructureSheaf{\X}(kD))\cong 
H^0(B,[V\otimes L^{-1}]\otimes f_*\StructureSheaf{\X}(kD)).
\]
In particular, $k\geq 0$ and so $f_*\StructureSheaf{\X}(kD)$ is the trivial line-bundle, by our assumption on $D$. 
%Set $L_1:=L\otimes (f_*\StructureSheaf{\X}(kD))^{-1}$.
Let $\bar{\iota}:L\rightarrow V$ be the homomorphism associated to $\iota$ via the above displayed isomorphism.
Then $\iota$ factors through $f^*(\bar{\iota}):f^*L\rightarrow f^*V$ and so the image of $L$ in $V$ must be saturated, since $F$ is saturated in $f^*V$. Furthermore, we must have $k=0$ and $\iota=f^*\bar{\iota}$, since otherwise $F$ can not be a saturated subsheaf.

For $r\geq 1$ the above argument shows that the saturation of the image of $\wedge^r F$ in $f^*\wedge^rV$
is of the form $f^*L$, for some line-bundle $L$ and a homomorphism $\bar{\iota}:L\rightarrow \wedge^rV$ with a saturated image.
The subsheaf $F$ is a subbundle, away from a closed subscheme $\Z$ of $\X$ of co-dimension $\geq 2$ (supporting the singular locus of the torsion free quotient sheaf $f^*V/F$). 
Similarly, $L$ is a subbundle of $\wedge^rV$, away from a closed subscheme $\overline{\Z}$ of $B$ of co-dimension $\geq 2$.
We may assume that $L$ is a subbundle, possibly after replacing $B$ by $B\setminus \overline{\Z}$.
If $x$ is a point of $\X\setminus \Z$, then the fiber of $f^*L$ at $x$ is 
$\wedge^rF_x$. It follows that the morphism $B\rightarrow \PP(\wedge^rV)$ factors through the Pl\"{u}cker embedding of 
$Gr(r,V)$ in $\PP(\wedge^rV)$ and so $L$ is  of the form $\Wedge{r}W$ for a subbundle $W$ of $V$.
Hence, $F=f^*W$.
\end{proof}
%*************
% End Hide
%*************
}

\begin{proof}[Proof of Proposition \ref{prop-ell-perp-mod-ell-is-stable}]
Let $S^n$ be the $n$-th Cartesian product, $S^{(n)}$ the $n$-th symmetric
product, and $q:S^n\rightarrow S^{(n)}$ the quotient morphism. Denote by
$U\subset S^{(n)}$ the complement of the diagonal subscheme and set
$\widetilde{U}:=q^{-1}(U)$. Denote by $q:\widetilde{U}\rightarrow U$ the
covering map. 
We identify $U$ also as an open subset of the Hilbert scheme $S^{[n]}$.

Let $p_n:\PP(T^*S^n)\rightarrow S^n$ be the projection and denote by
$\tilde{\ell}$ the tautological line sub-bundle of $p_n^*T^*S^n$.
Denote by $\fS_n$ the symmetric group on $n$ letters.
Let $\sigma_n$ be a $\fS_n$-invariant symplectic structure on $S^n$.
Note that $\sigma_n$ is unique, up to a scalar factor. 
Denote by $\tilde{\ell}^\perp$ the sub-bundle of $p_n^*T^*S^n$
symplectic-orthogonal to $\tilde{\ell}$ with respect to $\sigma_n$. 
Both $\tilde{\ell}$ and $\tilde{\ell}^\perp$ are $\fS_n$-invariant sub-bundles of 
$p_n^*T^*S^n$. Hence, $\tilde{\ell}^\perp/\tilde{\ell}$ is endowed with the structure of an
$\fS_n$-equivariant vector bundle over $\PP(T^*S^n)$. 

Let $\pi_k:S^n\rightarrow S$ be the projection on the $k$-th factor.
%Let $\pi_{\widehat{k}}:S^n\rightarrow S^{n-1}$ be the projection onto the product of all the factors other than the $k$-th. 
Let $\tilde{\pi}_k:\PP(T^*S^n)\rightarrow S$ be the composition $\pi_k\circ p_n$.
%Set $\tilde{\pi}_{\widehat{k}}:= \pi_{\widehat{k}}\circ p_n:\PP(T^*S^n)\rightarrow S^{n-1}$.
Let $\tilde{\ell}_i$ be the projection of $\tilde{\ell}$ to $\tilde{\pi}_k^*T^*S$. 
The projection $\tilde{\ell}\rightarrow \tilde{\ell}_i$ is an isomorphism.
Let $\widetilde{C}$ be the  quotient
$(\oplus_{i=1}^n\tilde{\ell}_i)/\tilde{\ell}$. Then $\widetilde{C}$ is an $\fS_n$-invariant  subsheaf of $\tilde{\ell}^\perp/\tilde{\ell}$ of 
rank $n-1$. It thus corresponds to a saturated subsheaf $C$ of $\ell^\perp/\ell$ of rank $n-1$. 
Set $\widetilde{Q}:=[\tilde{\ell}^\perp/\tilde{\ell}]/\widetilde{C}$. 
We get the short exact sequence
\begin{equation}
\label{eq-equivariant-short-exact-sequence}
0\rightarrow \widetilde{C} \rightarrow \tilde{\ell}^\perp/\tilde{\ell}\rightarrow \widetilde{Q} \rightarrow 0
\end{equation}
of $\fS_n$-equivariant coherent sheaves  over $\PP(T^*S^n)$. $\widetilde{C}$ is isomorphic, as a $\fS_n$-equivariant 
sheaf, to $\tilde{\ell}\otimes_\ComplexNumbers W$, where $W$ is the reflection representation of $\fS_n$.  
If the torsion subsheaf of $\widetilde{Q}$ is non-zero, then its support has codimension $\geq 2$. But $\widetilde{C}$ is locally free
and hence reflexive. Consequently, 
the sheaf $\widetilde{Q}$ is torsion free, by Lemma \ref{lemma-reflexive-criterion} (\ref{lemma-item-iff}).
%\cite[Cor. 1.5]{hartshorne-reflexive}. 
The dual sheaf $\widetilde{Q}^*$ is isomorphic to $\widetilde{C}$, since $\widetilde{C}$ 
is a Lagrangian subsheaf with respect to the $\fS_n$-invariant symplectic structure on $\tilde{\ell}^\perp/\tilde{\ell}$.
We conclude that neither $\widetilde{C}$ nor $\widetilde{Q}$ 
admit any non-trivial saturated $\fS_n$-equivariant subsheaf of lower rank, by Lemmas 
\ref{lemma-vanishing-of-global-sections-of-symmetric-powers} and \ref{lemma-on-equivariant-subsheaves}.

Assume that $F$ is  a non-trivial saturated
subsheaf of $\ell^\perp/\ell$  of rank $<2n-2$. 
Then $F$ restricts to a subsheaf of the restriction of  $\ell^\perp/\ell$ to $\PP(T^*U)$. 
Now $q^*\PP(T^*U)$ is isomorphic to $\PP(T^*\widetilde{U})$ 
yielding an \'{e}tale morphism $\tilde{q}:\PP(T^*\widetilde{U})\rightarrow \PP(T^*U)$, 
and 
$\tilde{q}^*F$ extends to a non-trivial saturated $\fS_n$-invariant  subsheaf $\widetilde{F}$ of 
$\tilde{\ell}^\perp/\tilde{\ell}$ of rank $\leq 2n-3$. 
We may assume that $\widetilde{F}$
has rank $\leq n-1$, possibly after replacing $\widetilde{F}$ with its symplectic-orthogonal subsheaf. 

If $\widetilde{F}$ is not contained in $\widetilde{C}$, then the natural homomorphism $h:\widetilde{F}\rightarrow \widetilde{Q}$ 
is $\fS_n$-equivariant and it does not vanish. Its image is an equivariant subsheaf of $\widetilde{Q}$ and it thus must have rank $n-1$.
The support of the quotient $\widetilde{Q}/h(\widetilde{F})$ has codimension $\geq 2$, since $\PP(T^*S^n)$ does not contain any 
effective divisor, by Lemma \ref{lemma-vanishing-of-global-sections-of-symmetric-powers}. 
The sheaf $\widetilde{F}$ is reflexive, being a saturated subsheaf of a locally free sheaf.
The quotient $\widetilde{Q}/h(\widetilde{F})$ must thus vanish, 
by Lemma \ref{lemma-reflexive-criterion} (\ref{lemma-item-torsion-free-F}),
since  $\widetilde{Q}$ is torsion free.
%(see \cite[Cor. 1.5]{hartshorne-reflexive}). 
Hence, $h$ is an isomorphism and the short exact sequence (\ref{eq-equivariant-short-exact-sequence}) splits. 
But the bundle $\tilde{\ell}^\perp/\tilde{\ell}$ restricts to a slope-stable bundle with trivial determinant 
over every $\PP^{2n-1}$ fiber of $p_n$ \cite[Lemma 7.4]{torelli}. A contradiction. Hence, $\widetilde{F}$ is contained in $\widetilde{C}$.
Consequently, $\widetilde{F}=\widetilde{C}$, since the latter does not have any $\fS_n$-equivariant subsheaf
of lower rank. 
This completes the proof of Proposition \ref{prop-ell-perp-mod-ell-is-stable}.
%*****************
% Hide
%*****************
\hide{
The rest of the proof is by induction on $n$.  The case $n=2$ is the initial step.

%\noindent
\underline{Case $n=2$:} In this case $\tilde{\ell}^\perp/\tilde{\ell}$ has rank $2$ and $\widetilde{F}$
is a reflexive sheaf of rank $1$, hence locally free.
Now $\Pic(\PP(T^*S^n))$ is cyclic, generated by the line-bundle $\tilde{\ell}$. Hence,
$\widetilde{F}$ is isomorphic to $\tilde{\ell}^N$, for some integer $N$.
The bundle $\tilde{\ell}^\perp/\tilde{\ell}$ restricts to a slope-stable bundle with trivial determinant 
over every $\PP^3$ fiber of $p_2$ \cite[Lemma 7.4]{torelli}. Hence, $N$ is positive.
We conclude that the $\fS_2$-invariant subspace
$H^0(\PP(T^*S^2),[\tilde{\ell}^\perp/\tilde{\ell}]\otimes \tilde{\ell}^{-N})^{\fS_2}$
does not vanish, for some positive integer $N$.
Consequently, $H^0\left(S^2,p_{2,*}\left\{\left[
\tilde{\ell}^\perp/\tilde{\ell}\right]\otimes \tilde{\ell}^{-N}
\right\}\right)^{\fS_2}$ does not vanish. We prove that this space  vanishes, if $N\neq 3$, and it is one dimensional if $N=3$, leading to the desired equality $F=C$.

Let us calculate 
the push-forward $p_{2,*}\left\{\left[\tilde{\ell}^\perp/\tilde{\ell}\right]\otimes \tilde{\ell}^{-N}\right\}$. 
We have the short exact sequence
\[
0\rightarrow \tilde{\ell}^\perp \otimes \tilde{\ell}^{-N} \rightarrow 
p_2^*T^*S^2\otimes \tilde{\ell}^{-N}\rightarrow \tilde{\ell}^{-N-1}\rightarrow 0.
\]
Identify $TS^2$ with $T^*S^2$ via $\sigma_2$. 
Then $p_{2,*}\left\{\tilde{\ell}^\perp\otimes \tilde{\ell}^{-N}\right\}$ is the kernel of
\[
T^*S^2\otimes \Sym^N(T^*S^2)\rightarrow \Sym^{N+1}(T^*S^2).
\]
The latter is the vector bundle associated to $T^*S^2$ via the Schur functor for the partition $(N,1)$ of $N+1$ 
and is consequently a direct summand of $\Wedge{2}T^*S^2\otimes \Sym^{N-1}(T^*S^2)$ \cite[Cor. 2 in Sec. 8.3, or Pieri Formula (4) in Sec. 2.2 ]{fulton}.
We conclude that $p_{2,*}\left\{\left[\tilde{\ell}^\perp/\tilde{\ell}\right]\otimes \tilde{\ell}^{-N}\right\}$ is a direct summand of
\[
\left[\Wedge{2}T^*S^2/\StructureSheaf{S^2}\cdot \sigma_2\right]\otimes \Sym^{N-1}(T^*S^2).
\]
It suffices to prove that the above vector bundle does not have $\fS_2$-invariant global sections.

The bundle $\Wedge{2}T^*S^2$ is isomorphic to the direct sum
\[
[\pi_1^*\Wedge{2}T^*S] \  \oplus \  
[\pi_2^*\Wedge{2}T^*S] \ \oplus \ 
[\pi_1^*T^*S\otimes \pi_2^*T^*S].
\]
Hence, $\Wedge{2}T^*S^2/\StructureSheaf{S^2}\cdot\sigma_2$ is isomorphic to the direct sum of
$\pi_1^*T^*S\otimes \pi_2^*T^*S$ and  the trivial line-bundle
$L:=\StructureSheaf{S^2}\cdot\sigma^-$, where the two form $\sigma^-$ spans
a non-trivial character of $\fS_2$. The vector bundle
$\Sym^{N-1}(T^*S^2)$ is isomorphic to the direct sum
\[
\oplus_{p=0}^{N-1}\left[\Sym^p(\pi_1^*T^*S)\otimes \Sym^{N-1-p}(\pi_2^*T^*S)\right].
\]

The vector spaces $H^0(S,\Sym^kT^*S)$ vanish, for all positive $k$, by Lemma
\ref{lemma-vanishing-of-global-sections-of-symmetric-powers}.
We conclude that $L\otimes \Sym^{N-1}(T^*S^2)$ does not have non-zero
$\fS_2$-invariant sections, for any positive $N$.

The vector bundle $T^*S\otimes \Sym^k(T^*S)$ does not have any non-zero global sections, for $k>1$, by 
Lemma 
\ref{lemma-vanishing-of-global-sections-of-symmetric-powers} and the isomorphism
$T^*S\otimes \Sym^k(T^*S)\cong \Sym^{k-1}(T^*S)\oplus \Sym^{k+1}(T^*S)$.
Thus, the vector bundles
\[
\left[\pi_1^*T^*S\otimes \pi_2^*T^*S\right]
\otimes
\left\{
\oplus_{p=0}^{N-1}\left[\Sym^p(\pi_1^*T^*S)\otimes \Sym^{N-1-p}(\pi_2^*T^*S)\right]
\right\}
\]
have non-zero global sections only for $N=3$. In this case the one-dimensional space 
of global sections arises from the direct summand
\begin{equation}
\label{eq-N=3-summand-which-has-global-section}
\left[\pi_1^*T^*S\otimes \pi_2^*T^*S\right] \otimes
\left\{
\Sym^1(\pi_1^*T^*S)\otimes \Sym^1(\pi_2^*T^*S)
\right\},
\end{equation}
which contains a direct summand isomorphic to
$\pi_1^*\Wedge{2}T^*S\otimes \pi_2^*\Wedge{2}T^*S$.
This one dimensional space of global sections must be $\fS_2$-invariant and corresponds to the 
line subbundle $C$ of $\ell^\perp/\ell$ constructed above. 
We conclude that $C$ is the unique non-trivial saturated proper subsheaf of $\ell^\perp/\ell$.

\noindent
\underline{Induction step.}
Assume $n\geq 3$.
Let $\phi:\PP(T^*S^{n-1})\times S\rightarrow \PP(T^*S^{n-1})$ be the projection on the the first factor and let 
$a:\PP(T^*S^{n-1})\times S\rightarrow S^n$ be morphism, such that  $\pi_k\circ a$ is the projection on the second factor 
and $p_{n-1}\circ \phi=\pi_{\hat{k}}\circ a$. Set $W:=\phi^*\tilde{\ell}_{n-1}\oplus a^*\pi_k^*T^*S$. 
We get the commutative diagram
\[
\xymatrix{
\PP(W) \ar[r]^b\ar[d]_\psi & \PP(T^*S^n) \ar[r]^{p_n} & S^n \ar[d]^{\pi_{\hat{k}}}
\\
\PP(T^*S^{n-1})\times S \ar[urr]^a \ar[r]_\phi & \PP(T^*S^{n-1}) \ar[r]_{p_{n-1}} & S^{n-1},
}
\]
where $\psi$ is the natural projection.
Note that $W$ is naturally a subbundle of $a^*T^*S^n$ and so the tautological line subbundle $\StructureSheaf{\PP(W)}(-1)$
of $\psi^*W$ is also a line-subbundle of $\psi^*a^*T^*S^n$, inducing the unique morphism $b$, such that $b^*\tilde{\ell}=\StructureSheaf{\PP(W)}(-1)$, as line subbundles of $a^*T^*S^n$.
The morphism $b$ is in fact the blow-up of $\PP(T^*S^n)$ along the subbundle $\PP(\pi_k^*T^*S)$ and $\psi$ is the projection from $\PP(\pi_k^*T^*S)$. 

Let $\Phi\subset \tilde{\ell}^\perp \subset p_n^*T^*S^n$ be the saturated subsheaf containing $\tilde{\ell}$, such that 
$\Phi/\tilde{\ell}=\widetilde{F}$. Then the rank of $\Phi$ is $\leq n$. 

\noindent
\underline{Case $n>3$ or [$n=3$ and $\rank(\widetilde{F})=1$] of the induction step:}
Set $\widetilde{Q}:=[\tilde{\ell}^\perp/\tilde{\ell}]/\widetilde{F}$ and let $\widetilde{Q}_{sing}$
be the closed analytic subset of $\PP(T^*S^n)$, where  $\widetilde{Q}$ is not locally free. 
The codimension of $\widetilde{Q}_{sing}$ is at least $2$, since $\widetilde{Q}$ is torsion free, by the assumption that $\widetilde{F}$ is saturated. 
Hence, the restriction of $\PP(T^*S^n)$ to $S^{n-1}\times \{x\}$, considered as a subvariety of 
$\PP(T^*S^n)$, is not contained in $\widetilde{Q}_{sing}$, for all but finitely many points $x$ of $S$. 
Choose such a point $x\in S$. Then the saturated restriction $G$ of $\widetilde{F}$ to 
$\PP(T^*S^n\restricted{)}{S^{n-1}\times \{x\}}$  has the same rank as $\widetilde{F}$.

Next, note that the restriction of $T^*S^n$ to $S^{n-1}\times \{x\}$ is the direct sum of $T^*S^{n-1}$ and the trivial vector bundle with fiber
$T^*_xS$. Let $A$ be a line in $T^*_xS$ and denote by $\underline{A}$ the trivial line bundle 
over $S^{n-1}\times \{x\}$ with fiber $A$.
Set $D_A:=\PP[T^*(S^{n-1}\times \{x\})\oplus\underline{A}],$
regarded as a divisor in $\PP(T^*S^n\restricted{)}{S^{n-1}\times \{x\}}$. The saturated restriction $G'_A$ of $G$ to 
$D_A$ has the same rank as that of $G$, hence also as the rank of $\widetilde{F}$.
The saturated restriction $G''_A$ of $G'_A$ to the divisor $\PP[T^*(S^{n-1}\times \{x\})]$ in $D_A$ has again the same rank as $\widetilde{F}$. 
The restriction of $\tilde{\ell}^\perp/\tilde{\ell}$ to $\PP[T^*(S^{n-1}\times\{x\})]$ is the $\fS_{n-1}$-equivariant 
direct sum $[\tilde{\ell}_{n-1}^\perp/\tilde{\ell}_{n-1}] \oplus B$, where 
$\tilde{\ell}_{n-1}^\perp/\tilde{\ell}_{n-1}$
is the  sheaf  analogous to $[\tilde{\ell}^\perp/\tilde{\ell}]$ associated with $n-1$ and 
$B$ is the trivial vector bundle over $\PP[T^*(S^{n-1}\times\{x\})]$ with fiber $T^*_{x}S$.

\begin{claim}
$G''_A$ is not contained in the trivial direct summand $B$ of the restriction
of $[\tilde{\ell}^\perp/\tilde{\ell}]$.
\end{claim}

\begin{proof}
%The Picard group of $\PP(T^*S^n)$  are both cyclic.
%Denote by $H_n$ the line bundle  which restrict
%as $\StructureSheaf{\PP^{2n-1}}(1)$ to the fibers of $p_n$.
%Denote by $H_{n-1}$  the corresponding line bundle over
%$\PP(T^*S^{n-1})$.
%
We prove first that $\det(G')$  is isomorphic to $\tilde{\ell}_{n-1}^d$, for some positive integer $d$.
Let $\Sigma$ be the singular locus of $\widetilde{F}$. Then $\Sigma$ has codimension $\geq 3$ in
$\PP(T^*S^n)$. Thus $\Sigma$ does not contain 
$\pi_{\widehat{n}}^*\PP(T^*S^{n-1})$. The latter is isomorphic to
$S\times \PP(T^*S^{n-1})$ and 
it does not contain any effective divisor,
by Lemma \ref{lemma-vanishing-of-global-sections-of-symmetric-powers}.
We conclude that $\Sigma\cap \pi_{\widehat{n}}^*\PP(T^*S^{n-1})$ has codimension $\geq 2$ in
$\pi_{\widehat{n}}^*\PP(T^*S^{n-1})$. It follows that $\det(G)$ is equal to the restriction
of $\det(\widetilde{F})$.
A repeat of this argument shows that $\det(G')$ is equal to the restriction of
$\det(\widetilde{F})$.

The restriction of $\tilde{\ell}^\perp/\tilde{\ell}$ to a $\PP^{2n-1}$ fiber of $p_n$ is a slope stable vector bundle
with a trivial determinant line-bundle \cite[Lemma 7.4]{torelli}.
Hence, the restriction of $\det(\widetilde{F})$ to each fiber is of the form $\StructureSheaf{\PP^{2n-1}}(d)$,
for some negative integer $d$. We conclude that $\det(G')$ is isomorphic to $\tilde{\ell}_{n-1}^d$, 
for some positive $d$.
In particular, $G'$ is not equal to $B$.

It remains to be proved that $G'$ is not a rank $1$ saturated subsheaf of the trivial rank $2$ vector bundle
$B$ over $\PP(T^*S^{n-1})$. 
If this was the case, then $G'$ would be isomorphic to $\tilde{\ell}_{n-1}^d$ and its embedding in $B$ would correspond to a non-zero
section of $H^0(\PP(T^*S^{n-1}),B\otimes \tilde{\ell}_{n-1}^{-d})$. But the latter is isomorphic to 
$T^*_{x_n}S\otimes_{\ComplexNumbers}H^0(\PP(T^*S^{n-1}),\tilde{\ell}_{n-1}^{-d})$, which  vanishes 
by Lemma \ref{lemma-vanishing-of-global-sections-of-symmetric-powers}.
%then the direct sum of $G'$ with a trivial line bundle would admit a rank $2$ homomorphism onto
%a rank $2$ subsheaf $A$ of $B$ 
%and $\det(A)$ is isomorphic to $G'$, which is isomorphic to $\tilde{\ell}_{n-1}^d$.
%It follows that the quotient $Q:=B/A$ is supported on an effective
%divisor in $\PP(T^*S^{n-1})$ of class $c_1(\tilde{\ell}^{-d})$.
%A contradiction, since $\PP(T^*S^{n-1})$ does not contain any effective divisor.
\end{proof}

Let $\widetilde{F}_{n-1}$ be the saturation of the 
projection of $G'$ to the direct summand $\tilde{\ell}_{n-1}^\perp/\tilde{\ell}_{n-1}$.
The above claim shows that the projection is non-trivial. 
If $n>3$, then $\rank(G')=\rank(\widetilde{F})\leq n-1<2n-4=\rank[\tilde{\ell}_{n-1}^\perp/\tilde{\ell}_{n-1}]$.
Thus, the rank of $\widetilde{F}_{n-1}$ is lower than the rank of $\tilde{\ell}_{n-1}^\perp/\tilde{\ell}_{n-1}$.
If $n=3$ and $\rank(\widetilde{F})=1$, then $\widetilde{F}_{n-1}$ has rank one. 
%In both cases we get a contradiction, since 
%$\tilde{\ell}_{n-1}^\perp/\tilde{\ell}_{n-1}$ does not have non-trivial saturated subsheaves of lower rank, by the induction hypothesis. 

\underline{Case $n=3$ and $\rank(\widetilde{F})=2$ of the induction step:} 
The above construction yields a $\fS_2$-invariant saturated subsheaf  $G'$ of
$[\tilde{\ell}_{2}^\perp/\tilde{\ell}_{2}]\oplus B$, which is not equal to 
the trivial summand $B:=T_{x_3}S\otimes_\ComplexNumbers \StructureSheaf{\PP(T^*S^2)}$, and $\det(G')$ is isomorphic to $\tilde{\ell}^d_2$, for some positive integer $d$.
The $\fS_2$-equivariant  projection 
\[
h \ : \ G' \ \ \rightarrow \ \ [\tilde{\ell}_{2}^\perp/\tilde{\ell}_{2}]
\]
is thus of positive rank. 
We claim that the rank of $h$ is strictly less than $2$. Indeed, otherwise the cokernel of $h$
will be supported on an effective divisor in $\PP(T^*S^2)$. But such a divisor does not exist. Thus, 
$h$ has rank $1$. Define $\widetilde{F}_2$ to be the saturation of its image. 

In all cases, given  a non-trivial subsheaf $F_n$ of $\ell^\perp/\ell$ over $\PP(T^*S^{[n]})$ of rank less than $2n-2$,
we constructed a non-trivial $\fS_{n-1}$-invariant 
subsheaf $\widetilde{F}_{n-1}$ of $\tilde{\ell}_{n-1}^\perp/\tilde{\ell}_{n-1}$ of rank less than $2n-4$ 
over $\PP(T^*S^{n-1})$.
Let $F_{n-1}$ 
be the corresponding saturated subsheaf of $\ell^\perp/\ell$ over $\PP(T^*S^{[n-1]})$.
The induction hypothesis implies that such a sheaf $F_{n-1}$ does not exist. 
Hence,  such a subsheaf $F_n$ can not exist.
This completes the proof of Proposition \ref{prop-ell-perp-mod-ell-is-stable}.
%*****************
% End Hide
%*****************
}
\end{proof}

%****************************************************************
% Bibliography:
%****************************************************************

\end{document}